  \let\fiverm\fivrm
\def\@picture(#1,#2)(#3,#4){%
  \@picht #2\unitlength
  \setbox\@picbox\hbox to #1\unitlength\bgroup 
  \let\endpicture=\!latexendpicture
  \let\frame=\!latexframe
  \let\linethickness=\!latexlinethickness
  \let\multiput=\!latexmultiput
  \let\put=\!latexput
  \hskip -#3\unitlength \lower #4\unitlength \hbox\bgroup}
\font\fiverm=cmr5
\def\PiC{P\kern-.12em\lower.5ex\hbox{I}\kern-.075emC}
\def\PiCTeX{\PiC\kern-.11em\TeX}
\def\!ifnextchar#1#2#3{%
  \let\!testchar=#1%
  \def\!first{#2}%
  \def\!second{#3}%
  \futurelet\!nextchar\!testnext}
\def\!testnext{%
  \ifx \!nextchar \!spacetoken 
    \let\!next=\!skipspacetestagain
  \else
    \ifx \!nextchar \!testchar
      \let\!next=\!first
    \else 
      \let\!next=\!second 
    \fi 
  \fi
  \!next}
\def\\{\!skipspacetestagain} 
\def\\ {\futurelet\!nextchar\!testnext} 
\def\\{\let\!spacetoken= } \\  
\def\!tfor#1:=#2\do#3{%
  \edef\!fortemp{#2}%
  \ifx\!fortemp\!empty 
    \else
    \!tforloop#2\!nil\!nil\!!#1{#3}%
  \fi}
\def\!tforloop#1#2\!!#3#4{%
  \def#3{#1}%
  \ifx #3\!nnil
    \let\!nextwhile=\!fornoop
  \else
    #4\relax
    \let\!nextwhile=\!tforloop
  \fi 
  \!nextwhile#2\!!#3{#4}}
\def\!etfor#1:=#2\do#3{%
  \def\!!tfor{\!tfor#1:=}%
  \edef\!!!tfor{#2}%
  \expandafter\!!tfor\!!!tfor\do{#3}}
\def\!cfor#1:=#2\do#3{%
  \edef\!fortemp{#2}%
  \ifx\!fortemp\!empty 
  \else
    \!cforloop#2,\!nil,\!nil\!!#1{#3}%
  \fi}
\def\!cforloop#1,#2\!!#3#4{%
  \def#3{#1}%
  \ifx #3\!nnil
    \let\!nextwhile=\!fornoop 
  \else
    #4\relax
    \let\!nextwhile=\!cforloop
  \fi
  \!nextwhile#2\!!#3{#4}}
\def\!ecfor#1:=#2\do#3{%
  \def\!!cfor{\!cfor#1:=}%
  \edef\!!!cfor{#2}%
  \expandafter\!!cfor\!!!cfor\do{#3}}
\def\!empty{}
\def\!nnil{\!nil}
\def\!fornoop#1\!!#2#3{}
\def\!ifempty#1#2#3{%
  \edef\!emptyarg{#1}%
  \ifx\!emptyarg\!empty
    #2%
  \else
    #3%
  \fi}
\def\!getnext#1\from#2{%
  \expandafter\!gnext#2\!#1#2}%
\def\!gnext\\#1#2\!#3#4{%
  \def#3{#1}%
  \def#4{#2\\{#1}}%
  \ignorespaces}
\def\!getnextvalueof#1\from#2{%
  \expandafter\!gnextv#2\!#1#2}%
\def\!gnextv\\#1#2\!#3#4{%
  #3=#1%
  \def#4{#2\\{#1}}%
  \ignorespaces}
\def\!copylist#1\to#2{%
  \expandafter\!!copylist#1\!#2}
\def\!!copylist#1\!#2{%
  \def#2{#1}\ignorespaces}
\def\!wlet#1=#2{%
  \let#1=#2 
  \wlog{\string#1=\string#2}}
\def\!listaddon#1#2{%
  \expandafter\!!listaddon#2\!{#1}#2}
\def\!!listaddon#1\!#2#3{%
  \def#3{#1\\#2}}
\def\!rightappend#1\withCS#2\to#3{\expandafter\!!rightappend#3\!#2{#1}#3}
\def\!!rightappend#1\!#2#3#4{\def#4{#1#2{#3}}}
\def\!leftappend#1\withCS#2\to#3{\expandafter\!!leftappend#3\!#2{#1}#3}
\def\!!leftappend#1\!#2#3#4{\def#4{#2{#3}#1}}
\def\!lop#1\to#2{\expandafter\!!lop#1\!#1#2}
\def\!!lop\\#1#2\!#3#4{\def#4{#1}\def#3{#2}}
\def\!loop#1\repeat{\def\!body{#1}\!iterate}
\def\!iterate{\!body\let\!next=\!iterate\else\let\!next=\relax\fi\!next}
\def\!!loop#1\repeat{\def\!!body{#1}\!!iterate}
\def\!!iterate{\!!body\let\!!next=\!!iterate\else\let\!!next=\relax\fi\!!next}
\def\!removept#1#2{\edef#2{\expandafter\!!removePT\the#1}}
{\catcode`p=12 \catcode`t=12 \gdef\!!removePT#1pt{#1}}
\def\placevalueinpts of <#1> in #2 {%
  \!removept{#1}{#2}}
\def\!mlap#1{\hbox to 0pt{\hss#1\hss}}
\def\!vmlap#1{\vbox to 0pt{\vss#1\vss}}
\def\!not#1{%
  #1\relax
    \!switchfalse
  \else
    \!switchtrue
  \fi
  \if!switch
  \ignorespaces}
\def\wlog#1{}    
\newdimen\headingtoplotskip     
\newdimen\linethickness         
\newdimen\longticklength        
\newdimen\plotsymbolspacing     
\newdimen\shortticklength       
\newdimen\stackleading          
\newdimen\tickstovaluesleading  
\newdimen\totalarclength        
\newdimen\valuestolabelleading  
\newbox\!boxA                   
\newbox\!boxB                   
\newbox\!picbox                 
\newbox\!plotsymbol             
\newbox\!putobject              
\newbox\!shadesymbol            
\newdimen\!Xleft                
\newdimen\!Xright               
\newdimen\!Xsave                
\newdimen\!Ybot                 
\newdimen\!Ysave                
\newdimen\!Ytop                 
\newdimen\!angle                
\newdimen\!arclength            
\newdimen\!areabloc             
\newdimen\!arealloc             
\newdimen\!arearloc             
\newdimen\!areatloc             
\newdimen\!bshrinkage           
\newdimen\!checkbot             
\newdimen\!checkleft            
\newdimen\!checkright           
\newdimen\!checktop             
\newdimen\!dimenA               
\newdimen\!dimenB               
\newdimen\!dimenC               
\newdimen\!dimenD               
\newdimen\!dimenE               
\newdimen\!dimenF               
\newdimen\!dimenG               
\newdimen\!dimenH               
\newdimen\!dimenI               
\newdimen\!distacross           
\newdimen\!downlength           
\newdimen\!dp                   
\newdimen\!dshade               
\newdimen\!dxpos                
\newdimen\!dxprime              
\newdimen\!dypos                
\newdimen\!dyprime              
\newdimen\!ht                   
\newdimen\!leaderlength         
\newdimen\!lshrinkage           
\newdimen\!midarclength         
\newdimen\!offset               
\newdimen\!plotheadingoffset    
\newdimen\!plotsymbolxshift     
\newdimen\!plotsymbolyshift     
\newdimen\!plotxorigin          
\newdimen\!plotyorigin          
\newdimen\!rootten              
\newdimen\!rshrinkage           
\newdimen\!shadesymbolxshift    
\newdimen\!shadesymbolyshift    
\newdimen\!tenAa                
\newdimen\!tenAc                
\newdimen\!tenAe                
\newdimen\!tshrinkage           
\newdimen\!uplength             
\newdimen\!wd                   
\newdimen\!wmax                 
\newdimen\!wmin                 
\newdimen\!xB                   
\newdimen\!xC                   
\newdimen\!xE                   
\newdimen\!xM                   
\newdimen\!xS                   
\newdimen\!xaxislength          
\newdimen\!xdiff                
\newdimen\!xleft                
\newdimen\!xloc                 
\newdimen\!xorigin              
\newdimen\!xpivot               
\newdimen\!xpos                 
\newdimen\!xprime               
\newdimen\!xright               
\newdimen\!xshade               
\newdimen\!xshift               
\newdimen\!xtemp                
\newdimen\!xunit                
\newdimen\!xxE                  
\newdimen\!xxM                  
\newdimen\!xxS                  
\newdimen\!xxloc                
\newdimen\!yB                   
\newdimen\!yC                   
\newdimen\!yE                   
\newdimen\!yM                   
\newdimen\!yS                   
\newdimen\!yaxislength          
\newdimen\!ybot                 
\newdimen\!ydiff                
\newdimen\!yloc                 
\newdimen\!yorigin              
\newdimen\!ypivot               
\newdimen\!ypos                 
\newdimen\!yprime               
\newdimen\!yshade               
\newdimen\!yshift               
\newdimen\!ytemp                
\newdimen\!ytop                 
\newdimen\!yunit                
\newdimen\!yyE                  
\newdimen\!yyM                  
\newdimen\!yyS                  
\newdimen\!yyloc                
\newdimen\!zpt                  
\newif\if!axisvisible           
\newif\if!gridlinestoo          
\newif\if!keepPO                
\newif\if!placeaxislabel        
\newif\if!switch                
\newif\if!xswitch               
\newtoks\!axisLaBeL             
\newtoks\!keywordtoks           
\newwrite\!replotfile           
\def\!cosrotationangle{1}      
\def\!sinrotationangle{0}      
\def\!xpivotcoord{0}           
\def\!xref{0}                  
\def\!xshadesave{0}            
\def\!ypivotcoord{0}           
\def\!yref{0}                  
\def\!yshadesave{0}            
\def\!zero{0}                  
\let\wlog=\!!!wlog
\def\normalgraphs{%
  \longticklength=.4\baselineskip
  \shortticklength=.25\baselineskip
  \tickstovaluesleading=.25\baselineskip
  \valuestolabelleading=.8\baselineskip
  \linethickness=.4pt
  \stackleading=.17\baselineskip
  \headingtoplotskip=1.5\baselineskip
  \visibleaxes
  \ticksout
  \nogridlines
  \unloggedticks}
\def\setplotarea x from #1 to #2, y from #3 to #4 {%
  \!arealloc=\!M{#1}\!xunit \advance \!arealloc -\!xorigin
  \!areabloc=\!M{#3}\!yunit \advance \!areabloc -\!yorigin
  \!arearloc=\!M{#2}\!xunit \advance \!arearloc -\!xorigin
  \!areatloc=\!M{#4}\!yunit \advance \!areatloc -\!yorigin
  \!initinboundscheck
  \!xaxislength=\!arearloc  \advance\!xaxislength -\!arealloc
  \!yaxislength=\!areatloc  \advance\!yaxislength -\!areabloc
  \!plotheadingoffset=\!zpt
  \!dimenput {{\setbox0=\hbox{}\wd0=\!xaxislength\ht0=\!yaxislength\box0}}
     [bl] (\!arealloc,\!areabloc)}
\def\visibleaxes{%
  \def\!axisvisibility{\!axisvisibletrue}}
\def\!fixkeyword#1{%
  \errhelp=\!keywordhelp
  \errmessage{Unrecognized keyword `#1': \the\!keywordtoks{NEW KEYWORD}'}}
\def\fixkeyword#1{%
  \!nextkeyword#1 }
\def\axis {%
  \def\!nextkeyword##1 {%
    \expandafter\ifx\csname !axis##1\endcsname \relax
      \def\!next{\!fixkeyword{##1}}%
    \else
      \def\!next{\csname !axis##1\endcsname}%
    \fi
    \!next}%
  \!offset=\!zpt
  \!axisvisibility
  \!placeaxislabelfalse
  \!nextkeyword}
\def\!axisbottom{%
  \!axisylevel=\!areabloc
  \def\!tickxsign{0}%
  \def\!tickysign{-}%
  \def\!axissetup{\!axisxsetup}%
  \def\!axislabeltbrl{t}%
  \!nextkeyword}
\def\!axistop{%
  \!axisylevel=\!areatloc
  \def\!tickxsign{0}%
  \def\!tickysign{+}%
  \def\!axissetup{\!axisxsetup}%
  \def\!axislabeltbrl{b}%
  \!nextkeyword}
\def\!axisleft{%
  \!axisxlevel=\!arealloc
  \def\!tickxsign{-}%
  \def\!tickysign{0}%
  \def\!axissetup{\!axisysetup}%
  \def\!axislabeltbrl{r}%
  \!nextkeyword}
\def\!axisright{%
  \!axisxlevel=\!arearloc
  \def\!tickxsign{+}%
  \def\!tickysign{0}%
  \def\!axissetup{\!axisysetup}%
  \def\!axislabeltbrl{l}%
  \!nextkeyword}
\def\!axisshiftedto#1=#2 {%
  \if 0\!tickxsign
    \!axisylevel=\!M{#2}\!yunit
    \advance\!axisylevel -\!yorigin
  \else
    \!axisxlevel=\!M{#2}\!xunit
    \advance\!axisxlevel -\!xorigin
  \fi
  \!nextkeyword}
\def\!axisvisible{%
  \!axisvisibletrue  
  \!nextkeyword}
\def\!axisinvisible{%
  \!axisvisiblefalse
  \!nextkeyword}
\def\!axislabel#1 {%
  \!axisLaBeL={#1}%
  \!placeaxislabeltrue
  \!nextkeyword}
\def\csname !axis/\endcsname{%
  \!axissetup 
  \if!placeaxislabel
    \!placeaxislabel
  \fi
  \if +\!tickysign 
    \!dimenA=\!axisylevel
    \advance\!dimenA \!offset 
    \advance\!dimenA -\!areatloc 
    \ifdim \!dimenA>\!plotheadingoffset
      \!plotheadingoffset=\!dimenA 
    \fi
  \fi}
\def\grid #1 #2 {%
  \!countA=#1\advance\!countA 1
  \axis bottom invisible ticks length <\!zpt> andacross quantity {\!countA} /
  \!countA=#2\advance\!countA 1
  \axis left   invisible ticks length <\!zpt> andacross quantity {\!countA} / }
\def\plotheading#1 {%
  \advance\!plotheadingoffset \headingtoplotskip
  \!dimenput {#1} [B] <.5\!xaxislength,\!plotheadingoffset>
    (\!arealloc,\!areatloc)}
\def\!axisxsetup{%
  \!axisxlevel=\!arealloc
  \!axisstart=\!arealloc
  \!axisend=\!arearloc
  \!axisLength=\!xaxislength
  \!!origin=\!xorigin
  \!!unit=\!xunit
  \!xswitchtrue
  \if!axisvisible 
    \!makeaxis
  \fi}
\def\!axisysetup{%
  \!axisylevel=\!areabloc
  \!axisstart=\!areabloc
  \!axisend=\!areatloc
  \!axisLength=\!yaxislength
  \!!origin=\!yorigin
  \!!unit=\!yunit
  \!xswitchfalse
  \if!axisvisible
    \!makeaxis
  \fi}
\def\!makeaxis{%
  \setbox\!boxA=\hbox{
    \beginpicture
      \!setdimenmode
      \setcoordinatesystem point at {\!zpt} {\!zpt}   
      \putrule from {\!zpt} {\!zpt} to
        {\!tickysign\!tickysign\!axisLength} 
        {\!tickxsign\!tickxsign\!axisLength}
    \endpicturesave <\!Xsave,\!Ysave>}%
    \wd\!boxA=\!zpt
    \!placetick\!axisstart}
\def\!placeaxislabel{%
  \advance\!offset \valuestolabelleading
  \if!xswitch
    \!dimenput {\the\!axisLaBeL} [\!axislabeltbrl]
      <.5\!axisLength,\!tickysign\!offset> (\!axisxlevel,\!axisylevel)
    \advance\!offset \!dp  
    \advance\!offset \!ht  
  \else
    \!dimenput {\the\!axisLaBeL} [\!axislabeltbrl]
      <\!tickxsign\!offset,.5\!axisLength> (\!axisxlevel,\!axisylevel)
  \fi
  \!axisLaBeL={}}
\def\arrow <#1> [#2,#3]{%
  \!ifnextchar<{\!arrow{#1}{#2}{#3}}{\!arrow{#1}{#2}{#3}<\!zpt,\!zpt> }}
\def\!arrow#1#2#3<#4,#5> from #6 #7 to #8 #9 {%
%
  \!xloc=\!M{#8}\!xunit   
  \!yloc=\!M{#9}\!yunit
  \!dxpos=\!xloc  \!dimenA=\!M{#6}\!xunit  \advance \!dxpos -\!dimenA
  \!dypos=\!yloc  \!dimenA=\!M{#7}\!yunit  \advance \!dypos -\!dimenA
  \let\!MAH=\!M
  \!setdimenmode
  \!xshift=#4\relax  \!yshift=#5\relax
  \!reverserotateonly\!xshift\!yshift
  \advance\!xshift\!xloc  \advance\!yshift\!yloc
%
  \!xS=-\!dxpos  \advance\!xS\!xshift
  \!yS=-\!dypos  \advance\!yS\!yshift
  \!start (\!xS,\!yS)
  \!ljoin (\!xshift,\!yshift)
%
  \!Pythag\!dxpos\!dypos\!arclength
  \!divide\!dxpos\!arclength\!dxpos  
  \!dxpos=32\!dxpos  \!removept\!dxpos\!!cos
  \!divide\!dypos\!arclength\!dypos  
  \!dypos=32\!dypos  \!removept\!dypos\!!sin
%
  \!halfhead{#1}{#2}{#3}
  \!halfhead{#1}{-#2}{-#3}
  \let\!M=\!MAH
  \ignorespaces}
  \def\!halfhead#1#2#3{%
    \!dimenC=-#1%
    \divide \!dimenC 2 
    \!dimenD=#2\!dimenC
    \!rotate(\!dimenC,\!dimenD)by(\!!cos,\!!sin)to(\!xM,\!yM)
    \!dimenC=-#1
    \!dimenD=#3\!dimenC
    \!dimenD=.5\!dimenD
    \!rotate(\!dimenC,\!dimenD)by(\!!cos,\!!sin)to(\!xE,\!yE)
    \!start (\!xshift,\!yshift)
    \advance\!xM\!xshift  \advance\!yM\!yshift
    \advance\!xE\!xshift  \advance\!yE\!yshift
    \!qjoin (\!xM,\!yM) (\!xE,\!yE) 
    \ignorespaces}
\def\betweenarrows #1#2 from #3 #4 to #5 #6 {%
  \!xloc=\!M{#3}\!xunit  \!xxloc=\!M{#5}\!xunit%
  \!yloc=\!M{#4}\!yunit  \!yyloc=\!M{#6}\!yunit%
  \!dxpos=\!xxloc  \advance\!dxpos by -\!xloc
  \!dypos=\!yyloc  \advance\!dypos by -\!yloc
  \advance\!xloc .5\!dxpos
  \advance\!yloc .5\!dypos
  \let\!MBA=\!M
  \!setdimenmode
  \ifdim\!dypos=\!zpt
    \ifdim\!dxpos<\!zpt \!dxpos=-\!dxpos \fi
    \put {\!lrarrows{\!dxpos}{#1}}#2{} at {\!xloc} {\!yloc}
  \else
    \ifdim\!dxpos=\!zpt
      \ifdim\!dypos<\!zpt \!dypos=-\!zpt \fi
      \put {\!udarrows{\!dypos}{#1}}#2{} at {\!xloc} {\!yloc}
    \fi
  \fi
  \let\!M=\!MBA
  \ignorespaces}
\def\!lrarrows#1#2{
  {\setbox\!boxA=\hbox{$\mkern-2mu\mathord-\mkern-2mu$}%
   \setbox\!boxB=\hbox{$\leftarrow$}\!dimenE=\ht\!boxB
   \setbox\!boxB=\hbox{}\ht\!boxB=2\!dimenE
   \hbox to #1{$\mathord\leftarrow\mkern-6mu
     \cleaders\copy\!boxA\hfil
     \mkern-6mu\mathord-$%
     \kern.4em $\vcenter{\box\!boxB}$$\vcenter{\hbox{#2}}$\kern.4em
     $\mathord-\mkern-6mu
     \cleaders\copy\!boxA\hfil
     \mkern-6mu\mathord\rightarrow$}}}
\def\!udarrows#1#2{
  {\setbox\!boxB=\hbox{#2}%
   \setbox\!boxA=\hbox to \wd\!boxB{\hss$\vert$\hss}%
   \!dimenE=\ht\!boxA \advance\!dimenE \dp\!boxA \divide\!dimenE 2
   \vbox to #1{\offinterlineskip
      \vskip .05556\!dimenE
      \hbox to \wd\!boxB{\hss$\mkern.4mu\uparrow$\hss}\vskip-\!dimenE
      \cleaders\copy\!boxA\vfil
      \vskip-\!dimenE\copy\!boxA
      \vskip\!dimenE\copy\!boxB\vskip.4em
      \copy\!boxA\vskip-\!dimenE
      \cleaders\copy\!boxA\vfil
      \vskip-\!dimenE \hbox to \wd\!boxB{\hss$\mkern.4mu\downarrow$\hss}
      \vskip .05556\!dimenE}}}
\def\putbar#1breadth <#2> from #3 #4 to #5 #6 {%
  \!xloc=\!M{#3}\!xunit  \!xxloc=\!M{#5}\!xunit%
  \!yloc=\!M{#4}\!yunit  \!yyloc=\!M{#6}\!yunit%
  \!dypos=\!yyloc  \advance\!dypos by -\!yloc
  \!dimenI=#2  
  \ifdim \!dimenI=\!zpt 
    \putrule#1from {#3} {#4} to {#5} {#6} 
  \else 
    \let\!MBar=\!M
    \!setdimenmode 
    \divide\!dimenI 2
    \ifdim \!dypos=\!zpt             
      \advance \!yloc -\!dimenI 
      \advance \!yyloc \!dimenI
    \else
      \advance \!xloc -\!dimenI 
      \advance \!xxloc \!dimenI
    \fi
    \putrectangle#1corners at {\!xloc} {\!yloc} and {\!xxloc} {\!yyloc}
    \let\!M=\!MBar 
  \fi
  \ignorespaces}
\def\setbars#1breadth <#2> baseline at #3 = #4 {%
  \edef\!barshift{#1}%
  \edef\!barbreadth{#2}%
  \edef\!barorientation{#3}%
  \edef\!barbaseline{#4}%
  \def\!bardobaselabel{\!bardoendlabel}%
  \def\!bardoendlabel{\!barfinish}%
  \let\!drawcurve=\!barcurve
  \!setbars}
\def\!setbars{%
  \futurelet\!nextchar\!!setbars}
\def\!!setbars{%
  \if b\!nextchar
    \def\!!!setbars{\!setbarsbget}%
  \else 
    \if e\!nextchar
      \def\!!!setbars{\!setbarseget}%
    \else
      \def\!!!setbars{\relax}%
    \fi
  \fi
  \!!!setbars}
\def\!setbarsbget baselabels (#1) {%
  \def\!barbaselabelorientation{#1}%
  \def\!bardobaselabel{\!!bardobaselabel}%
  \!setbars}
\def\!setbarseget endlabels (#1) {%
  \edef\!barendlabelorientation{#1}%
  \def\!bardoendlabel{\!!bardoendlabel}%
  \!setbars}
\def\!barcurve #1 #2 {%
  \if y\!barorientation
    \def\!basexarg{#1}%
    \def\!baseyarg{\!barbaseline}%
  \else
    \def\!basexarg{\!barbaseline}%
    \def\!baseyarg{#2}%
  \fi
  \expandafter\putbar\!barshift breadth <\!barbreadth> from {\!basexarg}
    {\!baseyarg} to {#1} {#2}
  \def\!endxarg{#1}%
  \def\!endyarg{#2}%
  \!bardobaselabel}
\def\!!bardobaselabel "#1" {%
  \put {#1}\!barbaselabelorientation{} at {\!basexarg} {\!baseyarg}
  \!bardoendlabel}
\def\!!bardoendlabel "#1" {%
  \put {#1}\!barendlabelorientation{} at {\!endxarg} {\!endyarg}
  \!barfinish}
\def\!barfinish{%
  \!ifnextchar/{\!finish}{\!barcurve}}
\def\putrectangle{%
  \!ifnextchar<{\!putrectangle}{\!putrectangle<\!zpt,\!zpt> }}
\def\!putrectangle<#1,#2> corners at #3 #4 and #5 #6 {%
%
  \!xone=\!M{#3}\!xunit  \!xtwo=\!M{#5}\!xunit%
  \!yone=\!M{#4}\!yunit  \!ytwo=\!M{#6}\!yunit%
  \ifdim \!xtwo<\!xone
    \!dimenI=\!xone  \!xone=\!xtwo  \!xtwo=\!dimenI
  \fi
  \ifdim \!ytwo<\!yone
    \!dimenI=\!yone  \!yone=\!ytwo  \!ytwo=\!dimenI
  \fi
  \!dimenI=#1\relax  \advance\!xone\!dimenI  \advance\!xtwo\!dimenI
  \!dimenI=#2\relax  \advance\!yone\!dimenI  \advance\!ytwo\!dimenI
  \let\!MRect=\!M
  \!setdimenmode
%
  \!shaderectangle
%
  \!dimenI=.5\linethickness
  \advance \!xone  -\!dimenI
  \advance \!xtwo   \!dimenI
  \putrule from {\!xone} {\!yone} to {\!xtwo} {\!yone} 
  \putrule from {\!xone} {\!ytwo} to {\!xtwo} {\!ytwo} 
%
  \advance \!xone   \!dimenI
  \advance \!xtwo  -\!dimenI%
  \advance \!yone  -\!dimenI
  \advance \!ytwo   \!dimenI
  \putrule from {\!xone} {\!yone} to {\!xone} {\!ytwo} 
  \putrule from {\!xtwo} {\!yone} to {\!xtwo} {\!ytwo} 
  \let\!M=\!MRect
  \ignorespaces}
\def\shaderectanglesoff{%
  \def\!shaderectangle{}%
  \ignorespaces}
\def\!!shaderectangle{%
  \!dimenA=\!xtwo  \advance \!dimenA -\!xone
  \!dimenB=\!ytwo  \advance \!dimenB -\!yone
  \ifdim \!dimenA<\!dimenB
    \!startvshade (\!xone,\!yone,\!ytwo)
    \!lshade      (\!xtwo,\!yone,\!ytwo)
  \else
    \!starthshade (\!yone,\!xone,\!xtwo)
    \!lshade      (\!ytwo,\!xone,\!xtwo)
  \fi
  \ignorespaces}
\def\frame{%
  \!ifnextchar<{\!frame}{\!frame<\!zpt> }}
\long\def\!frame<#1> #2{%
  \beginpicture
    \setcoordinatesystem units <1pt,1pt> point at 0 0 
    \put {#2} [Bl] at 0 0 
    \!dimenA=#1\relax
    \!dimenB=\!wd \advance \!dimenB \!dimenA
    \!dimenC=\!ht \advance \!dimenC \!dimenA
    \!dimenD=\!dp \advance \!dimenD \!dimenA
    \let\!MFr=\!M
    \!setdimenmode
    \putrectangle corners at {-\!dimenA} {-\!dimenD} and {\!dimenB} {\!dimenC}
    \!setcoordmode
    \let\!M=\!MFr
  \endpicture
  \ignorespaces}
\def\rectangle <#1> <#2> {%
  \setbox0=\hbox{}\wd0=#1\ht0=#2\frame {\box0}}
\def\plot{%
  \!ifnextchar"{\!plotfromfile}{\!drawcurve}}
\def\!plotfromfile"#1"{%
  \expandafter\!drawcurve \input #1 /}
\def\setquadratic{%
  \let\!drawcurve=\!qcurve
  \let\!!Shade=\!!qShade
  \let\!!!Shade=\!!!qShade}
\def\setlinear{%
  \let\!drawcurve=\!lcurve
  \let\!!Shade=\!!lShade
  \let\!!!Shade=\!!!lShade}
\def\sethistograms{%
  \let\!drawcurve=\!hcurve}
\def\!qcurve #1 #2 {%
  \!start (#1,#2)
  \!Qjoin}
\def\!Qjoin#1 #2 #3 #4 {%
  \!qjoin (#1,#2) (#3,#4)             
  \!ifnextchar/{\!finish}{\!Qjoin}}
\def\!lcurve #1 #2 {%
  \!start (#1,#2)
  \!Ljoin}
\def\!Ljoin#1 #2 {%
  \!ljoin (#1,#2)                    
  \!ifnextchar/{\!finish}{\!Ljoin}}
\def\!finish/{\ignorespaces}
\def\!hcurve #1 #2 {%
  \edef\!hxS{#1}%
  \edef\!hyS{#2}%
  \!hjoin}
\def\!hjoin#1 #2 {%
  \putrectangle corners at {\!hxS} {\!hyS} and {#1} {#2}
  \edef\!hxS{#1}%
  \!ifnextchar/{\!finish}{\!hjoin}}
\def\vshade #1 #2 #3 {%
  \!startvshade (#1,#2,#3)
  \!Shadewhat}
\def\hshade #1 #2 #3 {%
  \!starthshade (#1,#2,#3)
  \!Shadewhat}
\def\!Shadewhat{%
  \futurelet\!nextchar\!Shade}
\def\!Shade{%
  \if <\!nextchar
    \def\!nextShade{\!!Shade}%
  \else
    \if /\!nextchar
      \def\!nextShade{\!finish}%
    \else
      \def\!nextShade{\!!!Shade}%
    \fi
  \fi
  \!nextShade}
\def\!!lShade<#1> #2 #3 #4 {%
  \!lshade <#1> (#2,#3,#4)                 
  \!Shadewhat}
\def\!!!lShade#1 #2 #3 {%
  \!lshade (#1,#2,#3)
  \!Shadewhat} 
\def\!!qShade<#1> #2 #3 #4 #5 #6 #7 {%
  \!qshade <#1> (#2,#3,#4) (#5,#6,#7)      
  \!Shadewhat}
\def\!!!qShade#1 #2 #3 #4 #5 #6 {%
  \!qshade (#1,#2,#3) (#4,#5,#6)
  \!Shadewhat} 
\def\setdashpattern <#1>{%
  \def\!Flist{}\def\!Blist{}\def\!UDlist{}%
  \!countA=0
  \!ecfor\!item:=#1\do{%
    \!dimenA=\!item\relax
    \expandafter\!rightappend\the\!dimenA\withCS{\\}\to\!UDlist%
    \advance\!countA  1
    \ifodd\!countA
      \expandafter\!rightappend\the\!dimenA\withCS{\!Rule}\to\!Flist%
      \expandafter\!leftappend\the\!dimenA\withCS{\!Rule}\to\!Blist%
    \else 
      \expandafter\!rightappend\the\!dimenA\withCS{\!Skip}\to\!Flist%
      \expandafter\!leftappend\the\!dimenA\withCS{\!Skip}\to\!Blist%
    \fi}%
  \!leaderlength=\!zpt
  \def\!Rule##1{\advance\!leaderlength  ##1}%
  \def\!Skip##1{\advance\!leaderlength  ##1}%
  \!Flist%
  \ifdim\!leaderlength>\!zpt 
  \else
    \def\!Flist{\!Skip{24in}}\def\!Blist{\!Skip{24in}}\ignorespaces
    \def\!UDlist{\\{\!zpt}\\{24in}}\ignorespaces
    \!leaderlength=24in
  \fi
  \!dashingon}
\def\!dashingon{%
  \def\!advancedashing{\!!advancedashing}%
  \def\!drawlinearsegment{\!lineardashed}%
  \def\!puthline{\!putdashedhline}%
  \def\!putvline{\!putdashedvline}%
  \ignorespaces}%
\def\!dashingoff{%
  \def\!advancedashing{\relax}%
  \def\!drawlinearsegment{\!linearsolid}%
  \def\!puthline{\!putsolidhline}%
  \def\!putvline{\!putsolidvline}%
  \ignorespaces}
\def\setdots{%
  \!ifnextchar<{\!setdots}{\!setdots<5pt>}}
\def\!setdots<#1>{%
  \!dimenB=#1\advance\!dimenB -\plotsymbolspacing
  \ifdim\!dimenB<\!zpt
    \!dimenB=\!zpt
  \fi
\setdashpattern <\plotsymbolspacing,\!dimenB>}
\def\setdotsnear <#1> for <#2>{%
  \!dimenB=#2\relax  \advance\!dimenB -.05pt  
  \!dimenC=#1\relax  \!countA=\!dimenC 
  \!dimenD=\!dimenB  \advance\!dimenD .5\!dimenC  \!countB=\!dimenD
  \divide \!countB  \!countA
  \ifnum 1>\!countB 
    \!countB=1
  \fi
  \divide\!dimenB  \!countB
  \setdots <\!dimenB>}
\def\setdashes{%
  \!ifnextchar<{\!setdashes}{\!setdashes<5pt>}}
\def\!setdashes<#1>{\setdashpattern <#1,#1>}
\def\setdashesnear <#1> for <#2>{%
  \!dimenB=#2\relax  
  \!dimenC=#1\relax  \!countA=\!dimenC 
  \!dimenD=\!dimenB  \advance\!dimenD .5\!dimenC  \!countB=\!dimenD
  \divide \!countB  \!countA
  \ifodd \!countB 
  \else 
    \advance \!countB  1
  \fi
  \divide\!dimenB  \!countB
  \setdashes <\!dimenB>}
\def\setsolid{%
  \def\!Flist{\!Rule{24in}}\def\!Blist{\!Rule{24in}}%
  \def\!UDlist{\\{24in}\\{\!zpt}}%
  \!dashingoff}  
\def\!divide#1#2#3{%
  \!dimenB=#1
  \!dimenC=#2
  \!dimenD=\!dimenB
  \divide \!dimenD \!dimenC
  \!dimenA=\!dimenD
  \multiply\!dimenD \!dimenC
  \advance\!dimenB -\!dimenD
  \!dimenD=\!dimenC
    \ifdim\!dimenD<\!zpt \!dimenD=-\!dimenD 
  \fi
  \ifdim\!dimenD<64pt
    \!divstep[\!tfs]\!divstep[\!tfs]%
  \else 
    \!!divide
  \fi
  #3=\!dimenA\ignorespaces}
\def\!!divide{%
  \ifdim\!dimenD<256pt
    \!divstep[64]\!divstep[32]\!divstep[32]%
  \else 
    \!divstep[8]\!divstep[8]\!divstep[8]\!divstep[8]\!divstep[8]%
    \!dimenA=2\!dimenA
  \fi}
\def\!divstep[#1]{
  \!dimenB=#1\!dimenB
  \!dimenD=\!dimenB
    \divide \!dimenD by \!dimenC
  \!dimenA=#1\!dimenA
    \advance\!dimenA by \!dimenD%
  \multiply\!dimenD by \!dimenC
    \advance\!dimenB by -\!dimenD}
\def\Divide <#1> by <#2> forming <#3> {%
  \!divide{#1}{#2}{#3}}
\def\ellipticalarc axes ratio #1:#2 #3 degrees from #4 #5 center at #6 #7 {%
  \!angle=#3pt\relax
  \ifdim\!angle>\!zpt 
    \def\!sign{}
  \else 
    \def\!sign{-}\!angle=-\!angle
  \fi
  \!xxloc=\!M{#6}\!xunit
  \!yyloc=\!M{#7}\!yunit     
  \!xxS=\!M{#4}\!xunit
  \!yyS=\!M{#5}\!yunit
  \advance\!xxS -\!xxloc
  \advance\!yyS -\!yyloc
  \!divide\!xxS{#1pt}\!xxS 
  \!divide\!yyS{#2pt}\!yyS 
  \let\!MC=\!M
  \!setdimenmode
  \!xS=#1\!xxS  \advance\!xS\!xxloc
  \!yS=#2\!yyS  \advance\!yS\!yyloc
  \!start (\!xS,\!yS)%
  \!loop\ifdim\!angle>14.9999pt
    \!rotate(\!xxS,\!yyS)by(\!cos,\!sign\!sin)to(\!xxM,\!yyM) 
    \!rotate(\!xxM,\!yyM)by(\!cos,\!sign\!sin)to(\!xxE,\!yyE)
    \!xM=#1\!xxM  \advance\!xM\!xxloc  \!yM=#2\!yyM  \advance\!yM\!yyloc
    \!xE=#1\!xxE  \advance\!xE\!xxloc  \!yE=#2\!yyE  \advance\!yE\!yyloc
    \!qjoin (\!xM,\!yM) (\!xE,\!yE)
    \!xxS=\!xxE  \!yyS=\!yyE 
    \advance \!angle -15pt
  \repeat
  \ifdim\!angle>\!zpt
    \!angle=100.53096\!angle
    \divide \!angle 360 
    \!sinandcos\!angle\!!sin\!!cos
    \!rotate(\!xxS,\!yyS)by(\!!cos,\!sign\!!sin)to(\!xxM,\!yyM) 
    \!rotate(\!xxM,\!yyM)by(\!!cos,\!sign\!!sin)to(\!xxE,\!yyE)
    \!xM=#1\!xxM  \advance\!xM\!xxloc  \!yM=#2\!yyM  \advance\!yM\!yyloc
    \!xE=#1\!xxE  \advance\!xE\!xxloc  \!yE=#2\!yyE  \advance\!yE\!yyloc
    \!qjoin (\!xM,\!yM) (\!xE,\!yE)
  \fi
  \let\!M=\!MC
  \ignorespaces}
\def\!rotate(#1,#2)by(#3,#4)to(#5,#6){%
  \!dimenA=#3#1\advance \!dimenA -#4#2
  \!dimenB=#3#2\advance \!dimenB  #4#1
  \divide \!dimenA 32  \divide \!dimenB 32 
  #5=\!dimenA  #6=\!dimenB
  \ignorespaces}
\def\!sin{4.17684}
\def\!cos{31.72624}
\def\!sinandcos#1#2#3{%
 \!dimenD=#1
 \!dimenA=\!dimenD
 \!dimenB=32pt
 \!removept\!dimenD\!value
 \!dimenC=\!dimenD
 \!dimenC=\!value\!dimenC \divide\!dimenC by 64 
 \advance\!dimenB by -\!dimenC
 \!dimenC=\!value\!dimenC \divide\!dimenC by 96 
 \advance\!dimenA by -\!dimenC
 \!dimenC=\!value\!dimenC \divide\!dimenC by 128 
 \advance\!dimenB by \!dimenC%
 \!removept\!dimenA#2
 \!removept\!dimenB#3
 \ignorespaces}
\def\putrule#1from #2 #3 to #4 #5 {%
  \!xloc=\!M{#2}\!xunit  \!xxloc=\!M{#4}\!xunit%
  \!yloc=\!M{#3}\!yunit  \!yyloc=\!M{#5}\!yunit%
  \!dxpos=\!xxloc  \advance\!dxpos by -\!xloc
  \!dypos=\!yyloc  \advance\!dypos by -\!yloc
  \ifdim\!dypos=\!zpt
    \def\!!Line{\!puthline{#1}}\ignorespaces
  \else
    \ifdim\!dxpos=\!zpt
      \def\!!Line{\!putvline{#1}}\ignorespaces
    \else 
       \def\!!Line{}
    \fi
  \fi
  \let\!ML=\!M
  \!setdimenmode
  \!!Line%
  \let\!M=\!ML
  \ignorespaces}
\def\!putsolidhline#1{%
  \ifdim\!dxpos>\!zpt 
    \put{\!hline\!dxpos}#1[l] at {\!xloc} {\!yloc}
  \else 
    \put{\!hline{-\!dxpos}}#1[l] at {\!xxloc} {\!yyloc}
  \fi
  \ignorespaces}
\def\!putsolidvline#1{%
  \ifdim\!dypos>\!zpt 
    \put{\!vline\!dypos}#1[b] at {\!xloc} {\!yloc}
  \else 
    \put{\!vline{-\!dypos}}#1[b] at {\!xxloc} {\!yyloc}
  \fi
  \ignorespaces}
\def\!hline#1{\hbox to #1{\leaders \hrule height\linethickness\hfill}}
\def\!vline#1{\vbox to #1{\leaders \vrule width\linethickness\vfill}}
\def\!putdashedhline#1{%
  \ifdim\!dxpos>\!zpt 
    \!DLsetup\!Flist\!dxpos
    \put{\hbox to \!totalleaderlength{\!hleaders}\!hpartialpattern\!Rtrunc}
      #1[l] at {\!xloc} {\!yloc} 
  \else 
    \!DLsetup\!Blist{-\!dxpos}
    \put{\!hpartialpattern\!Ltrunc\hbox to \!totalleaderlength{\!hleaders}}
      #1[r] at {\!xloc} {\!yloc} 
  \fi
  \ignorespaces}
\def\!putdashedvline#1{%
  \!dypos=-\!dypos
  \ifdim\!dypos>\!zpt 
    \!DLsetup\!Flist\!dypos 
    \put{\vbox{\vbox to \!totalleaderlength{\!vleaders}
      \!vpartialpattern\!Rtrunc}}#1[t] at {\!xloc} {\!yloc} 
  \else 
    \!DLsetup\!Blist{-\!dypos}
    \put{\vbox{\!vpartialpattern\!Ltrunc
      \vbox to \!totalleaderlength{\!vleaders}}}#1[b] at {\!xloc} {\!yloc} 
  \fi
  \ignorespaces}
\def\!DLsetup#1#2{
  \let\!RSlist=#1
  \!countB=#2
  \!countA=\!leaderlength
  \divide\!countB by \!countA
  \!totalleaderlength=\!countB\!leaderlength
  \!Rresiduallength=#2%
  \advance \!Rresiduallength by -\!totalleaderlength
  \!Lresiduallength=\!leaderlength
  \advance \!Lresiduallength by -\!Rresiduallength
  \ignorespaces}
\def\!hleaders{%
  \def\!Rule##1{\vrule height\linethickness width##1}%
  \def\!Skip##1{\hskip##1}%
  \leaders\hbox{\!RSlist}\hfill}
\def\!hpartialpattern#1{%
  \!dimenA=\!zpt \!dimenB=\!zpt 
  \def\!Rule##1{#1{##1}\vrule height\linethickness width\!dimenD}%
  \def\!Skip##1{#1{##1}\hskip\!dimenD}%
  \!RSlist}
\def\!vleaders{%
  \def\!Rule##1{\hrule width\linethickness height##1}%
  \def\!Skip##1{\vskip##1}%
  \leaders\vbox{\!RSlist}\vfill}
\def\!vpartialpattern#1{%
  \!dimenA=\!zpt \!dimenB=\!zpt 
  \def\!Rule##1{#1{##1}\hrule width\linethickness height\!dimenD}%
  \def\!Skip##1{#1{##1}\vskip\!dimenD}%
  \!RSlist}
\def\!Rtrunc#1{\!trunc{#1}>\!Rresiduallength}
\def\!Ltrunc#1{\!trunc{#1}<\!Lresiduallength}
\def\!trunc#1#2#3{%
  \!dimenA=\!dimenB         
  \advance\!dimenB by #1%
  \!dimenD=\!dimenB  \ifdim\!dimenD#2#3\!dimenD=#3\fi
  \!dimenC=\!dimenA  \ifdim\!dimenC#2#3\!dimenC=#3\fi
  \advance \!dimenD by -\!dimenC}
\def\!start (#1,#2){%
  \!plotxorigin=\!xorigin  \advance \!plotxorigin by \!plotsymbolxshift
  \!plotyorigin=\!yorigin  \advance \!plotyorigin by \!plotsymbolyshift
  \!xS=\!M{#1}\!xunit \!yS=\!M{#2}\!yunit
  \!rotateaboutpivot\!xS\!yS
  \!copylist\!UDlist\to\!!UDlist
  \!getnextvalueof\!downlength\from\!!UDlist
  \!distacross=\!zpt
  \!intervalno=0 
  \global\totalarclength=\!zpt
  \ignorespaces}
\def\!ljoin (#1,#2){%
  \advance\!intervalno by 1
  \!xE=\!M{#1}\!xunit \!yE=\!M{#2}\!yunit
  \!rotateaboutpivot\!xE\!yE
  \!xdiff=\!xE \advance \!xdiff by -\!xS
  \!ydiff=\!yE \advance \!ydiff by -\!yS
  \!Pythag\!xdiff\!ydiff\!arclength
  \global\advance \totalarclength by \!arclength%
  \!drawlinearsegment
  \!xS=\!xE \!yS=\!yE
  \ignorespaces}
\def\!linearsolid{%
  \!npoints=\!arclength
  \!countA=\plotsymbolspacing
  \divide\!npoints by \!countA
  \ifnum \!npoints<1 
    \!npoints=1 
  \fi
  \divide\!xdiff by \!npoints
  \divide\!ydiff by \!npoints
  \!xpos=\!xS \!ypos=\!yS
  \loop\ifnum\!npoints>-1
    \!plotifinbounds
    \advance \!xpos by \!xdiff
    \advance \!ypos by \!ydiff
    \advance \!npoints by -1
  \repeat
  \ignorespaces}
\def\!lineardashed{%
  \ifdim\!distacross>\!arclength
    \advance \!distacross by -\!arclength  
  \else
    \loop\ifdim\!distacross<\!arclength
      \!divide\!distacross\!arclength\!dimenA
      \!removept\!dimenA\!t
      \!xpos=\!t\!xdiff \advance \!xpos by \!xS
      \!ypos=\!t\!ydiff \advance \!ypos by \!yS
      \!plotifinbounds
      \advance\!distacross by \plotsymbolspacing
      \!advancedashing
    \repeat  
    \advance \!distacross by -\!arclength
  \fi
  \ignorespaces}
\def\!!advancedashing{%
  \advance\!downlength by -\plotsymbolspacing
  \ifdim \!downlength>\!zpt
  \else
    \advance\!distacross by \!downlength
    \!getnextvalueof\!uplength\from\!!UDlist
    \advance\!distacross by \!uplength
    \!getnextvalueof\!downlength\from\!!UDlist
  \fi}
\def\inboundscheckoff{%
  \def\!plotifinbounds{\!plot(\!xpos,\!ypos)}%
  \def\!initinboundscheck{\relax}\ignorespaces}
\def\!!plotifinbounds{%
  \ifdim \!xpos<\!checkleft
  \else
    \ifdim \!xpos>\!checkright
    \else
      \ifdim \!ypos<\!checkbot
      \else
         \ifdim \!ypos>\!checktop
         \else
           \!plot(\!xpos,\!ypos)
         \fi 
      \fi
    \fi
  \fi}
\def\!!initinboundscheck{%
  \!checkleft=\!arealloc     \advance\!checkleft by \!xorigin
  \!checkright=\!arearloc    \advance\!checkright by \!xorigin
  \!checkbot=\!areabloc      \advance\!checkbot by \!yorigin
  \!checktop=\!areatloc      \advance\!checktop by \!yorigin}
\def\!logten#1#2{%
  \expandafter\!!logten#1\!nil
  \!removept\!dimenF#2%
  \ignorespaces}
\def\!!logten#1#2\!nil{%
  \if -#1%
    \!dimenF=\!zpt
    \def\!next{\ignorespaces}%
  \else
    \if +#1%
      \def\!next{\!!logten#2\!nil}%
    \else
      \if .#1%
        \def\!next{\!!logten0.#2\!nil}%
      \else
        \def\!next{\!!!logten#1#2..\!nil}%
      \fi
    \fi
  \fi
  \!next}
\def\!!!logten#1#2.#3.#4\!nil{%
  \!dimenF=1pt 
  \if 0#1%
    \!!logshift#3pt 
  \else 
    \!logshift#2/
    \!dimenE=#1.#2#3pt 
  \fi 
  \ifdim \!dimenE<\!rootten
    \multiply \!dimenE 10 
    \advance  \!dimenF -1pt
  \fi
  \!dimenG=\!dimenE
    \advance\!dimenG 10pt
  \advance\!dimenE -10pt 
  \multiply\!dimenE 10 
  \!divide\!dimenE\!dimenG\!dimenE
  \!removept\!dimenE\!t
  \!dimenG=\!t\!dimenE
  \!removept\!dimenG\!tt
  \!dimenH=\!tt\!tenAe
    \divide\!dimenH 100
  \advance\!dimenH \!tenAc
  \!dimenH=\!tt\!dimenH
    \divide\!dimenH 100   
  \advance\!dimenH \!tenAa
  \!dimenH=\!t\!dimenH
    \divide\!dimenH 100 
  \advance\!dimenF \!dimenH}
\def\!logshift#1{%
  \if #1/%
    \def\!next{\ignorespaces}%
  \else
    \advance\!dimenF 1pt 
    \def\!next{\!logshift}%
  \fi 
  \!next}
 \def\!!logshift#1{%
   \advance\!dimenF -1pt
   \if 0#1%
     \def\!next{\!!logshift}%
   \else
     \if p#1%
       \!dimenF=1pt
       \def\!next{\!dimenE=1p}%
     \else
       \def\!next{\!dimenE=#1.}%
     \fi
   \fi
   \!next}
\def\beginpicture{%
  \setbox\!picbox=\hbox\bgroup%
  \!xleft=\maxdimen  
  \!xright=-\maxdimen
  \!ybot=\maxdimen
  \!ytop=-\maxdimen}
\def\endpicture{%
  \ifdim\!xleft=\maxdimen
    \!xleft=\!zpt \!xright=\!zpt \!ybot=\!zpt \!ytop=\!zpt 
  \fi
  \global\!Xleft=\!xleft \global\!Xright=\!xright
  \global\!Ybot=\!ybot \global\!Ytop=\!ytop
  \egroup%
  \ht\!picbox=\!Ytop  \dp\!picbox=-\!Ybot
  \ifdim\!Ybot>\!zpt
  \else 
    \ifdim\!Ytop<\!zpt
      \!Ybot=\!Ytop
    \else
      \!Ybot=\!zpt
    \fi
  \fi
  \hbox{\kern-\!Xleft\lower\!Ybot\box\!picbox\kern\!Xright}}
\def\endpicturesave <#1,#2>{%
  \endpicture \global #1=\!Xleft \global #2=\!Ybot \ignorespaces}
\def\setcoordinatesystem{%
  \!ifnextchar{u}{\!getlengths }
    {\!getlengths units <\!xunit,\!yunit>}}
\def\!getlengths units <#1,#2>{%
  \!xunit=#1\relax
  \!yunit=#2\relax
  \!ifcoordmode 
    \let\!SCnext=\!SCccheckforRP
  \else
    \let\!SCnext=\!SCdcheckforRP
  \fi
  \!SCnext}
\def\!SCccheckforRP{%
  \!ifnextchar{p}{\!cgetreference }
    {\!cgetreference point at {\!xref} {\!yref} }}
\def\!cgetreference point at #1 #2 {%
  \edef\!xref{#1}\edef\!yref{#2}%
  \!xorigin=\!xref\!xunit  \!yorigin=\!yref\!yunit  
  \!initinboundscheck 
  \ignorespaces}
\def\!SCdcheckforRP{%
  \!ifnextchar{p}{\!dgetreference}%
    {\ignorespaces}}
\def\!dgetreference point at #1 #2 {%
  \!xorigin=#1\relax  \!yorigin=#2\relax
  \ignorespaces}
\long\def\put#1#2 at #3 #4 {%
  \!setputobject{#1}{#2}%
  \!xpos=\!M{#3}\!xunit  \!ypos=\!M{#4}\!yunit  
  \!rotateaboutpivot\!xpos\!ypos%
  \advance\!xpos -\!xorigin  \advance\!xpos -\!xshift
  \advance\!ypos -\!yorigin  \advance\!ypos -\!yshift
  \kern\!xpos\raise\!ypos\box\!putobject\kern-\!xpos%
  \!doaccounting\ignorespaces}
\long\def\multiput #1#2 at {%
  \!setputobject{#1}{#2}%
  \!ifnextchar"{\!putfromfile}{\!multiput}}
\def\!putfromfile"#1"{%
  \expandafter\!multiput \input #1 /}
\def\!multiput{%
  \futurelet\!nextchar\!!multiput}
\def\!!multiput{%
  \if *\!nextchar
    \def\!nextput{\!alsoby}%
  \else
    \if /\!nextchar
      \def\!nextput{\!finishmultiput}%
    \else
      \def\!nextput{\!alsoat}%
    \fi
  \fi
  \!nextput}
\def\!finishmultiput/{%
  \setbox\!putobject=\hbox{}%
  \ignorespaces}
\def\!alsoat#1 #2 {%
  \!xpos=\!M{#1}\!xunit  \!ypos=\!M{#2}\!yunit  
  \!rotateaboutpivot\!xpos\!ypos%
  \advance\!xpos -\!xorigin  \advance\!xpos -\!xshift
  \advance\!ypos -\!yorigin  \advance\!ypos -\!yshift
  \kern\!xpos\raise\!ypos\copy\!putobject\kern-\!xpos%
  \!doaccounting
  \!multiput}
\def\!alsoby*#1 #2 #3 {%
  \!dxpos=\!M{#2}\!xunit \!dypos=\!M{#3}\!yunit 
  \!rotateonly\!dxpos\!dypos
  \!ntemp=#1%
  \!!loop\ifnum\!ntemp>0
    \advance\!xpos by \!dxpos  \advance\!ypos by \!dypos
    \kern\!xpos\raise\!ypos\copy\!putobject\kern-\!xpos%
    \advance\!ntemp by -1
  \repeat
  \!doaccounting 
  \!multiput}
\def\accountingon{\def\!doaccounting{\!!doaccounting}\ignorespaces}
\def\!!doaccounting{%
  \!xtemp=\!xpos  
  \!ytemp=\!ypos
  \ifdim\!xtemp<\!xleft 
     \!xleft=\!xtemp 
  \fi
  \advance\!xtemp by  \!wd 
  \ifdim\!xright<\!xtemp 
    \!xright=\!xtemp
  \fi
  \advance\!ytemp by -\!dp
  \ifdim\!ytemp<\!ybot  
    \!ybot=\!ytemp
  \fi
  \advance\!ytemp by  \!dp
  \advance\!ytemp by  \!ht 
  \ifdim\!ytemp>\!ytop  
    \!ytop=\!ytemp  
  \fi}
\long\def\!setputobject#1#2{%
  \setbox\!putobject=\hbox{#1}%
  \!ht=\ht\!putobject  \!dp=\dp\!putobject  \!wd=\wd\!putobject
  \wd\!putobject=\!zpt
  \!xshift=.5\!wd   \!yshift=.5\!ht   \advance\!yshift by -.5\!dp
  \edef\!putorientation{#2}%
  \expandafter\!SPOreadA\!putorientation[]\!nil%
  \expandafter\!SPOreadB\!putorientation<\!zpt,\!zpt>\!nil\ignorespaces}
\def\!SPOreadA#1[#2]#3\!nil{\!etfor\!orientation:=#2\do\!SPOreviseshift}
\def\!SPOreadB#1<#2,#3>#4\!nil{\advance\!xshift by -#2\advance\!yshift by -#3}
\def\!SPOreviseshift{%
  \if l\!orientation 
    \!xshift=\!zpt
  \else 
    \if r\!orientation 
      \!xshift=\!wd
    \else 
      \if b\!orientation
        \!yshift=-\!dp
      \else 
        \if B\!orientation 
          \!yshift=\!zpt
        \else 
          \if t\!orientation 
            \!yshift=\!ht
          \fi 
        \fi
      \fi
    \fi
  \fi}
\long\def\!dimenput#1#2(#3,#4){%
  \!setputobject{#1}{#2}%
  \!xpos=#3\advance\!xpos by -\!xshift
  \!ypos=#4\advance\!ypos by -\!yshift
  \kern\!xpos\raise\!ypos\box\!putobject\kern-\!xpos%
  \!doaccounting\ignorespaces}
\def\!setdimenmode{%
  \let\!M=\!M!!\ignorespaces}
\def\!setcoordmode{%
  \let\!M=\!M!\ignorespaces}
\def\!ifcoordmode{%
  \ifx \!M \!M!}
\def\!ifdimenmode{%
  \ifx \!M \!M!!}
\def\!M!#1#2{#1#2} 
\def\!M!!#1#2{#1}
\let\setdimensionmode=\!setdimenmode
\let\setcoordinatemode=\!setcoordmode
\def\!stack[#1]{%
  \let\!lglue=\hfill \let\!rglue=\hfill
  \expandafter\let\csname !#1glue\endcsname=\relax
  \!ifnextchar<{\!!stack}{\!!stack<\stackleading>}}
\def\!!stack<#1>#2{%
  \vbox{\def\!valueslist{}\!ecfor\!value:=#2\do{%
    \expandafter\!rightappend\!value\withCS{\\}\to\!valueslist}%
    \!lop\!valueslist\to\!value
    \let\\=\cr\lineskiplimit=\maxdimen\lineskip=#1%
    \baselineskip=-1000pt\halign{\!lglue##\!rglue\cr \!value\!valueslist\cr}}%
  \ignorespaces}
\def\!lines[#1]#2{%
  \let\!lglue=\hfill \let\!rglue=\hfill
  \expandafter\let\csname !#1glue\endcsname=\relax
  \vbox{\halign{\!lglue##\!rglue\cr #2\crcr}}%
  \ignorespaces}
\def\!Lines[#1]#2{%
  \let\!lglue=\hfill \let\!rglue=\hfill
  \expandafter\let\csname !#1glue\endcsname=\relax
  \vtop{\halign{\!lglue##\!rglue\cr #2\crcr}}%
  \ignorespaces}
\def\setplotsymbol(#1#2){%
  \!setputobject{#1}{#2}
  \setbox\!plotsymbol=\box\!putobject%
  \!plotsymbolxshift=\!xshift 
  \!plotsymbolyshift=\!yshift 
  \ignorespaces}
\def\!!plot(#1,#2){%
  \!dimenA=-\!plotxorigin \advance \!dimenA by #1
  \!dimenB=-\!plotyorigin \advance \!dimenB by #2
  \kern\!dimenA\raise\!dimenB\copy\!plotsymbol\kern-\!dimenA%
  \ignorespaces}
\def\!!!plot(#1,#2){%
  \!dimenA=-\!plotxorigin \advance \!dimenA by #1
  \!dimenB=-\!plotyorigin \advance \!dimenB by #2
  \kern\!dimenA\raise\!dimenB\copy\!plotsymbol\kern-\!dimenA%
  \!countE=\!dimenA
  \!countF=\!dimenB
  \immediate\write\!replotfile{\the\!countE,\the\!countF.}%
  \ignorespaces}
\def\savelinesandcurves on "#1" {%
  \immediate\closeout\!replotfile
  \immediate\openout\!replotfile=#1%
  \let\!plot=\!!!plot}
\def\dontsavelinesandcurves {%
  \let\!plot=\!!plot}
\xdef\!Commentsignal{
\def\writesavefile#1 {%
  \immediate\write\!replotfile{\!Commentsignal #1}%
  \ignorespaces}

\def\replot"#1" {%
  \expandafter\!replot\input #1 /}
\def\!replot#1,#2. {%
  \!dimenA=#1sp
  \kern\!dimenA\raise#2sp\copy\!plotsymbol\kern-\!dimenA
  \futurelet\!nextchar\!!replot}
\def\!!replot{%
  \if /\!nextchar 
    \def\!next{\!finish}%
  \else
    \def\!next{\!replot}%
  \fi
  \!next}


 
 
\def\!Pythag#1#2#3{%
  \!dimenE=#1\relax                                     
  \ifdim\!dimenE<\!zpt 
    \!dimenE=-\!dimenE 
  \fi
  \!dimenF=#2\relax
  \ifdim\!dimenF<\!zpt 
    \!dimenF=-\!dimenF 
  \fi
  \advance \!dimenF by \!dimenE
  \ifdim\!dimenF=\!zpt 
    \!dimenG=\!zpt
  \else 
    \!divide{8\!dimenE}\!dimenF\!dimenE
    \advance\!dimenE by -4pt
      \!dimenE=2\!dimenE
    \!removept\!dimenE\!!t
    \!dimenE=\!!t\!dimenE
    \advance\!dimenE by 64pt
    \divide \!dimenE by 2
    \!dimenH=7pt
    \!!Pythag\!!Pythag\!!Pythag
    \!removept\!dimenH\!!t
    \!dimenG=\!!t\!dimenF
    \divide\!dimenG by 8
  \fi
  #3=\!dimenG
  \ignorespaces}

\def\!!Pythag{
  \!divide\!dimenE\!dimenH\!dimenI
  \advance\!dimenH by \!dimenI
    \divide\!dimenH by 2}

\def\placehypotenuse for <#1> and <#2> in <#3> {%
  \!Pythag{#1}{#2}{#3}}

 
 
 
\def\!qjoin (#1,#2) (#3,#4){%
  \advance\!intervalno by 1
  \!ifcoordmode
    \edef\!xmidpt{#1}\edef\!ymidpt{#2}%
  \else
    \!dimenA=#1\relax \edef\!xmidpt{\the\!dimenA}%
    \!dimenA=#2\relax \edef\!ymidpt{\the\!dimenA}%
  \fi
  \!xM=\!M{#1}\!xunit  \!yM=\!M{#2}\!yunit   \!rotateaboutpivot\!xM\!yM
  \!xE=\!M{#3}\!xunit  \!yE=\!M{#4}\!yunit   \!rotateaboutpivot\!xE\!yE
%
  \!dimenA=\!xM  \advance \!dimenA by -\!xS
  \!dimenB=\!xE  \advance \!dimenB by -\!xM
  \!xB=3\!dimenA \advance \!xB by -\!dimenB
  \!xC=2\!dimenB \advance \!xC by -2\!dimenA
%
  \!dimenA=\!yM  \advance \!dimenA by -\!yS%
  \!dimenB=\!yE  \advance \!dimenB by -\!yM%
  \!yB=3\!dimenA \advance \!yB by -\!dimenB%
  \!yC=2\!dimenB \advance \!yC by -2\!dimenA%
%
  \!xprime=\!xB  \!yprime=\!yB
  \!dxprime=.5\!xC  \!dyprime=.5\!yC
  \!getf \!midarclength=\!dimenA
  \!getf \advance \!midarclength by 4\!dimenA
  \!getf \advance \!midarclength by \!dimenA
  \divide \!midarclength by 12
%
  \!arclength=\!dimenA
  \!getf \advance \!arclength by 4\!dimenA
  \!getf \advance \!arclength by \!dimenA
  \divide \!arclength by 12
  \advance \!arclength by \!midarclength
  \global\advance \totalarclength by \!arclength
%
%
  \ifdim\!distacross>\!arclength 
    \advance \!distacross by -\!arclength
  \else
    \!initinverseinterp
    \loop\ifdim\!distacross<\!arclength
      \!inverseinterp
      \!xpos=\!t\!xC \advance\!xpos by \!xB
        \!xpos=\!t\!xpos \advance \!xpos by \!xS
      \!ypos=\!t\!yC \advance\!ypos by \!yB
        \!ypos=\!t\!ypos \advance \!ypos by \!yS
      \!plotifinbounds
      \advance\!distacross \plotsymbolspacing
      \!advancedashing
    \repeat  
    \advance \!distacross by -\!arclength
  \fi
  \!xS=\!xE
  \!yS=\!yE
  \ignorespaces}

\def\!getf{\!Pythag\!xprime\!yprime\!dimenA%
  \advance\!xprime by \!dxprime
  \advance\!yprime by \!dyprime}

\def\!initinverseinterp{%
  \ifdim\!arclength>\!zpt
    \!divide{8\!midarclength}\!arclength\!dimenE
    \ifdim\!dimenE<\!wmin \!setinverselinear
    \else 
      \ifdim\!dimenE>\!wmax \!setinverselinear
      \else
        \def\!inverseinterp{\!inversequad}\ignorespaces
%
%
         \!removept\!dimenE\!Ew
         \!dimenF=-\!Ew\!dimenE
         \advance\!dimenF by 32pt
         \!dimenG=8pt 
         \advance\!dimenG by -\!dimenE
         \!dimenG=\!Ew\!dimenG
         \!divide\!dimenF\!dimenG\!beta
         \!gamma=1pt
         \advance \!gamma by -\!beta
      \fi
    \fi
  \fi
  \ignorespaces}

\def\!inversequad{%
  \!divide\!distacross\!arclength\!dimenG
  \!removept\!dimenG\!v
  \!dimenG=\!v\!gamma
  \advance\!dimenG by \!beta
  \!dimenG=\!v\!dimenG
  \!removept\!dimenG\!t}

\def\!setinverselinear{%
  \def\!inverseinterp{\!inverselinear}%
  \divide\!dimenE by 8 \!removept\!dimenE\!t
  \!countC=\!intervalno \multiply \!countC 2
  \!countB=\!countC     \advance \!countB -1
  \!countA=\!countB     \advance \!countA -1
  \wlog{\the\!countB th point (\!xmidpt,\!ymidpt) being plotted 
    doesn't lie in the}%
  \wlog{ middle third of the arc between the \the\!countA th 
    and \the\!countC th points:}%
  \wlog{ [arc length \the\!countA\space to \the\!countB]/[arc length 
    \the \!countA\space to \the\!countC]=\!t.}%
  \ignorespaces}
 
\def\!inverselinear{%
  \!divide\!distacross\!arclength\!dimenG
  \!removept\!dimenG\!t}

 

\def\startrotation{%
  \let\!rotateaboutpivot=\!!rotateaboutpivot
  \let\!rotateonly=\!!rotateonly
  \!ifnextchar{b}{\!getsincos }%
    {\!getsincos by {\!cosrotationangle} {\!sinrotationangle} }}
\def\!getsincos by #1 #2 {%
  \edef\!cosrotationangle{#1}%
  \edef\!sinrotationangle{#2}%
  \!ifcoordmode 
    \let\!ROnext=\!ccheckforpivot
  \else
    \let\!ROnext=\!dcheckforpivot
  \fi
  \!ROnext}
\def\!ccheckforpivot{%
  \!ifnextchar{a}{\!cgetpivot}%
    {\!cgetpivot about {\!xpivotcoord} {\!ypivotcoord} }}
\def\!cgetpivot about #1 #2 {%
  \edef\!xpivotcoord{#1}%
  \edef\!ypivotcoord{#2}%
  \!xpivot=#1\!xunit  \!ypivot=#2\!yunit
  \ignorespaces}
\def\!dcheckforpivot{%
  \!ifnextchar{a}{\!dgetpivot}{\ignorespaces}}
\def\!dgetpivot about #1 #2 {%
  \!xpivot=#1\relax  \!ypivot=#2\relax
  \ignorespaces}

\def\stoprotation{%
  \let\!rotateaboutpivot=\!!!rotateaboutpivot
  \let\!rotateonly=\!!!rotateonly
  \ignorespaces}
 
\def\!!rotateaboutpivot#1#2{%
  \!dimenA=#1\relax  \advance\!dimenA -\!xpivot
  \!dimenB=#2\relax  \advance\!dimenB -\!ypivot
  \!dimenC=\!cosrotationangle\!dimenA
    \advance \!dimenC -\!sinrotationangle\!dimenB
  \!dimenD=\!cosrotationangle\!dimenB
    \advance \!dimenD  \!sinrotationangle\!dimenA
  \advance\!dimenC \!xpivot  \advance\!dimenD \!ypivot
  #1=\!dimenC  #2=\!dimenD
  \ignorespaces}

\def\!!rotateonly#1#2{%
  \!dimenA=#1\relax  \!dimenB=#2\relax 
  \!dimenC=\!cosrotationangle\!dimenA
    \advance \!dimenC -\!rotsign\!sinrotationangle\!dimenB
  \!dimenD=\!cosrotationangle\!dimenB
    \advance \!dimenD  \!rotsign\!sinrotationangle\!dimenA
  #1=\!dimenC  #2=\!dimenD
  \ignorespaces}
\def\!rotsign{}
\def\!!!rotateaboutpivot#1#2{\relax}
\def\!!!rotateonly#1#2{\relax}
\stoprotation

\def\!reverserotateonly#1#2{%
  \def\!rotsign{-}%
  \!rotateonly{#1}{#2}%
  \def\!rotsign{}%
  \ignorespaces}

\def\!getspan span <#1>{%
  \!dshade=#1\relax
  \!ifcoordmode 
    \let\!GRnext=\!GRccheckforAP
  \else
    \let\!GRnext=\!GRdcheckforAP
  \fi
  \!GRnext}
\def\!GRccheckforAP{%
  \!ifnextchar{p}{\!cgetanchor }
    {\!cgetanchor point at {\!xshadesave} {\!yshadesave} }}
\def\!cgetanchor point at #1 #2 {%
  \edef\!xshadesave{#1}\edef\!yshadesave{#2}%
  \!xshade=\!xshadesave\!xunit  \!yshade=\!yshadesave\!yunit
  \ignorespaces}
\def\!GRdcheckforAP{%
  \!ifnextchar{p}{\!dgetanchor}%
    {\ignorespaces}}
\def\!dgetanchor point at #1 #2 {%
  \!xshade=#1\relax  \!yshade=#2\relax
  \ignorespaces}

\def\setshadesymbol{%
  \!ifnextchar<{\!setshadesymbol}{\!setshadesymbol<,,,> }}

\def\!setshadesymbol <#1,#2,#3,#4> (#5#6){%
  \!setputobject{#5}{#6}%
  \setbox\!shadesymbol=\box\!putobject%
  \!shadesymbolxshift=\!xshift \!shadesymbolyshift=\!yshift
%
  \!dimenA=\!xshift \advance\!dimenA \!smidge
  \!override\!dimenA{#1}\!lshrinkage%
  \!dimenA=\!wd \advance \!dimenA -\!xshift
    \advance\!dimenA \!smidge
    \!override\!dimenA{#2}\!rshrinkage
  \!dimenA=\!dp \advance \!dimenA \!yshift
    \advance\!dimenA \!smidge
    \!override\!dimenA{#3}\!bshrinkage
  \!dimenA=\!ht \advance \!dimenA -\!yshift
    \advance\!dimenA \!smidge
    \!override\!dimenA{#4}\!tshrinkage
  \ignorespaces}
\def\!smidge{-.2pt}%

\def\!override#1#2#3{%
  \edef\!!override{#2}%
  \ifx \!!override\empty
    #3=#1\relax
  \else
    \if z\!!override
      #3=\!zpt
    \else
      \ifx \!!override\!blankz
        #3=\!zpt
      \else
        #3=#2\relax
      \fi
    \fi
  \fi
  \ignorespaces}
\def\!blankz{ z}

\setshadesymbol ({\fiverm .})

\def\!startvshade#1(#2,#3,#4){%
  \let\!!xunit=\!xunit%
  \let\!!yunit=\!yunit%
  \let\!!xshade=\!xshade%
  \let\!!yshade=\!yshade%
  \def\!getshrinkages{\!vgetshrinkages}%
  \let\!setshadelocation=\!vsetshadelocation%
  \!xS=\!M{#2}\!!xunit
  \!ybS=\!M{#3}\!!yunit
  \!ytS=\!M{#4}\!!yunit
  \!shadexorigin=\!xorigin  \advance \!shadexorigin \!shadesymbolxshift
  \!shadeyorigin=\!yorigin  \advance \!shadeyorigin \!shadesymbolyshift
  \ignorespaces}
 
\def\!starthshade#1(#2,#3,#4){%
  \let\!!xunit=\!yunit%
  \let\!!yunit=\!xunit%
  \let\!!xshade=\!yshade%
  \let\!!yshade=\!xshade%
  \def\!getshrinkages{\!hgetshrinkages}%
  \let\!setshadelocation=\!hsetshadelocation%
  \!xS=\!M{#2}\!!xunit
  \!ybS=\!M{#3}\!!yunit
  \!ytS=\!M{#4}\!!yunit
  \!shadexorigin=\!xorigin  \advance \!shadexorigin \!shadesymbolxshift
  \!shadeyorigin=\!yorigin  \advance \!shadeyorigin \!shadesymbolyshift
  \ignorespaces}

\def\!lattice#1#2#3#4#5{%
  \!dimenA=#1
  \!dimenB=#2
  \!countB=\!dimenB
%
  \!dimenC=#3
  \advance\!dimenC -\!dimenA
  \!countA=\!dimenC
  \divide\!countA \!countB
  \ifdim\!dimenC>\!zpt
    \!dimenD=\!countA\!dimenB
    \ifdim\!dimenD<\!dimenC
      \advance\!countA 1 
    \fi
  \fi
  \!dimenC=\!countA\!dimenB
    \advance\!dimenC \!dimenA
  #4=\!countA
  #5=\!dimenC
  \ignorespaces}

\def\!qshade#1(#2,#3,#4)#5(#6,#7,#8){%
  \!xM=\!M{#2}\!!xunit
  \!ybM=\!M{#3}\!!yunit
  \!ytM=\!M{#4}\!!yunit
  \!xE=\!M{#6}\!!xunit
  \!ybE=\!M{#7}\!!yunit
  \!ytE=\!M{#8}\!!yunit
  \!getcoeffs\!xS\!ybS\!xM\!ybM\!xE\!ybE\!ybB\!ybC
  \!getcoeffs\!xS\!ytS\!xM\!ytM\!xE\!ytE\!ytB\!ytC
  \def\!getylimits{\!qgetylimits}%
  \!shade{#1}\ignorespaces}
 
\def\!lshade#1(#2,#3,#4){%
  \!xE=\!M{#2}\!!xunit
  \!ybE=\!M{#3}\!!yunit
  \!ytE=\!M{#4}\!!yunit
  \!dimenE=\!xE  \advance \!dimenE -\!xS
  \!dimenC=\!ytE \advance \!dimenC -\!ytS
  \!divide\!dimenC\!dimenE\!ytB
  \!dimenC=\!ybE \advance \!dimenC -\!ybS
  \!divide\!dimenC\!dimenE\!ybB
  \def\!getylimits{\!lgetylimits}%
  \!shade{#1}\ignorespaces}
 
\def\!getcoeffs#1#2#3#4#5#6#7#8{%
  \!dimenC=#4\advance \!dimenC -#2
  \!dimenE=#3\advance \!dimenE -#1
  \!divide\!dimenC\!dimenE\!dimenF
  \!dimenC=#6\advance \!dimenC -#4
  \!dimenH=#5\advance \!dimenH -#3
  \!divide\!dimenC\!dimenH\!dimenG
  \advance\!dimenG -\!dimenF
  \advance \!dimenH \!dimenE
  \!divide\!dimenG\!dimenH#8
  \!removept#8\!t
  #7=-\!t\!dimenE
  \advance #7\!dimenF
  \ignorespaces}

\def\!shade#1{%
  \!getshrinkages#1<,,,>\!nil
  \advance \!dimenE \!xS
  \!lattice\!!xshade\!dshade\!dimenE
    \!parity\!xpos
  \!dimenF=-\!dimenF
    \advance\!dimenF \!xE
  \!loop\!not{\ifdim\!xpos>\!dimenF}
    \!shadecolumn%
    \advance\!xpos \!dshade
    \advance\!parity 1
  \repeat
  \!xS=\!xE
  \!ybS=\!ybE
  \!ytS=\!ytE
  \ignorespaces}

\def\!vgetshrinkages#1<#2,#3,#4,#5>#6\!nil{%
  \!override\!lshrinkage{#2}\!dimenE
  \!override\!rshrinkage{#3}\!dimenF
  \!override\!bshrinkage{#4}\!dimenG
  \!override\!tshrinkage{#5}\!dimenH
  \ignorespaces}
\def\!hgetshrinkages#1<#2,#3,#4,#5>#6\!nil{%
  \!override\!lshrinkage{#2}\!dimenG
  \!override\!rshrinkage{#3}\!dimenH
  \!override\!bshrinkage{#4}\!dimenE
  \!override\!tshrinkage{#5}\!dimenF
  \ignorespaces}

\def\!shadecolumn{%
  \!dxpos=\!xpos
  \advance\!dxpos -\!xS
  \!removept\!dxpos\!dx
  \!getylimits
  \advance\!ytpos -\!dimenH
  \advance\!ybpos \!dimenG
  \!yloc=\!!yshade
  \ifodd\!parity 
     \advance\!yloc \!dshade
  \fi
  \!lattice\!yloc{2\!dshade}\!ybpos%
    \!countA\!ypos
  \!dimenA=-\!shadexorigin \advance \!dimenA \!xpos
  \loop\!not{\ifdim\!ypos>\!ytpos}
    \!setshadelocation
    \!rotateaboutpivot\!xloc\!yloc%
    \!dimenA=-\!shadexorigin \advance \!dimenA \!xloc
    \!dimenB=-\!shadeyorigin \advance \!dimenB \!yloc
    \kern\!dimenA \raise\!dimenB\copy\!shadesymbol \kern-\!dimenA
    \advance\!ypos 2\!dshade
  \repeat
  \ignorespaces}
 
\def\!qgetylimits{%
  \!dimenA=\!dx\!ytC              
  \advance\!dimenA \!ytB
  \!ytpos=\!dx\!dimenA
  \advance\!ytpos \!ytS
  \!dimenA=\!dx\!ybC              
  \advance\!dimenA \!ybB
  \!ybpos=\!dx\!dimenA
  \advance\!ybpos \!ybS}
 
\def\!lgetylimits{%
  \!ytpos=\!dx\!ytB
  \advance\!ytpos \!ytS
  \!ybpos=\!dx\!ybB
  \advance\!ybpos \!ybS}
 
\def\!vsetshadelocation{
  \!xloc=\!xpos
  \!yloc=\!ypos}
\def\!hsetshadelocation{
  \!xloc=\!ypos
  \!yloc=\!xpos}





\def\!axisticks {%
  \def\!nextkeyword##1 {%
    \expandafter\ifx\csname !ticks##1\endcsname \relax
      \def\!next{\!fixkeyword{##1}}%
    \else
      \def\!next{\csname !ticks##1\endcsname}%
    \fi
    \!next}%
  \!axissetup
    \def\!axissetup{\relax}%
  \edef\!ticksinoutsign{\!ticksinoutSign}%
  \!ticklength=\longticklength
  \!tickwidth=\linethickness
  \!gridlinestatus
  \!setticktransform
  \!maketick
  \!tickcase=0
  \def\!LTlist{}%
  \!nextkeyword}

\def\ticksout{%
  \def\!ticksinoutSign{+}}

\ticksout

\def\nogridlines{%
  \def\!gridlinestatus{\!gridlinestoofalse}}
\nogridlines

\def\loggedticks{%
  \def\!setticktransform{\let\!ticktransform=\!logten}}
\def\unloggedticks{%
  \def\!setticktransform{\let\!ticktransform=\!donothing}}
\def\!donothing#1#2{\def#2{#1}}
\unloggedticks

\expandafter\def\csname !ticks/\endcsname{%
  \!not {\ifx \!LTlist\empty}
    \!placetickvalues
  \fi
  \def\!tickvalueslist{}%
  \def\!LTlist{}%
  \expandafter\csname !axis/\endcsname}

\def\!maketick{%
  \setbox\!boxA=\hbox{%
    \beginpicture
      \!setdimenmode
      \setcoordinatesystem point at {\!zpt} {\!zpt}   
      \linethickness=\!tickwidth
      \ifdim\!ticklength>\!zpt
        \putrule from {\!zpt} {\!zpt} to
          {\!ticksinoutsign\!tickxsign\!ticklength}
          {\!ticksinoutsign\!tickysign\!ticklength}
      \fi
      \if!gridlinestoo
        \putrule from {\!zpt} {\!zpt} to
          {-\!tickxsign\!xaxislength} {-\!tickysign\!yaxislength}
      \fi
    \endpicturesave <\!Xsave,\!Ysave>}%
    \wd\!boxA=\!zpt}
  
\def\!ticksin{%
  \def\!ticksinoutsign{-}%
  \!maketick
  \!nextkeyword}

\def\!ticksout{%
  \def\!ticksinoutsign{+}%
  \!maketick
  \!nextkeyword}

\def\!tickslength<#1> {%
  \!ticklength=#1\relax
  \!maketick
  \!nextkeyword}

\def\!tickslong{%
  \!tickslength<\longticklength> }

\def\!ticksshort{%
  \!tickslength<\shortticklength> }

\def\!tickswidth<#1> {%
  \!tickwidth=#1\relax
  \!maketick
  \!nextkeyword}

\def\!ticksandacross{%
  \!gridlinestootrue
  \!maketick
  \!nextkeyword}

\def\!ticksbutnotacross{%
  \!gridlinestoofalse
  \!maketick
  \!nextkeyword}

\def\!tickslogged{%
  \let\!ticktransform=\!logten
  \!nextkeyword}

\def\!ticksunlogged{%
  \let\!ticktransform=\!donothing
  \!nextkeyword}

\def\!ticksunlabeled{%
  \!tickcase=0
  \!nextkeyword}

\def\!ticksnumbered{%
  \!tickcase=1
  \!nextkeyword}

\def\!tickswithvalues#1/ {%
  \edef\!tickvalueslist{#1! /}%
  \!tickcase=2
  \!nextkeyword}

\def\!ticksquantity#1 {%
  \ifnum #1>1
    \!updatetickoffset
    \!countA=#1\relax
    \advance \!countA -1
    \!ticklocationincr=\!axisLength
      \divide \!ticklocationincr \!countA
    \!ticklocation=\!axisstart
    \loop \!not{\ifdim \!ticklocation>\!axisend}
      \!placetick\!ticklocation
      \ifcase\!tickcase
          \relax 
        \or
          \relax 
        \or
          \expandafter\!gettickvaluefrom\!tickvalueslist
          \edef\!tickfield{{\the\!ticklocation}{\!value}}%
          \expandafter\!listaddon\expandafter{\!tickfield}\!LTlist%
      \fi
      \advance \!ticklocation \!ticklocationincr
    \repeat
  \fi
  \!nextkeyword}

\def\!ticksat#1 {%
  \!updatetickoffset
  \edef\!Loc{#1}%
  \if /\!Loc
    \def\next{\!nextkeyword}%
  \else
    \!ticksincommon
    \def\next{\!ticksat}%
  \fi
  \next}    
      
\def\!ticksfrom#1 to #2 by #3 {%
  \!updatetickoffset
  \edef\!arg{#3}%
  \expandafter\!separate\!arg\!nil
  \!scalefactor=1
  \expandafter\!countfigures\!arg/
  \edef\!arg{#1}%
  \!scaleup\!arg by\!scalefactor to\!countE
  \edef\!arg{#2}%
  \!scaleup\!arg by\!scalefactor to\!countF
  \edef\!arg{#3}%
  \!scaleup\!arg by\!scalefactor to\!countG
  \loop \!not{\ifnum\!countE>\!countF}
    \ifnum\!scalefactor=1
      \edef\!Loc{\the\!countE}%
    \else
      \!scaledown\!countE by\!scalefactor to\!Loc
    \fi
    \!ticksincommon
    \advance \!countE \!countG
  \repeat
  \!nextkeyword}

\def\!updatetickoffset{%
  \!dimenA=\!ticksinoutsign\!ticklength
  \ifdim \!dimenA>\!offset
    \!offset=\!dimenA
  \fi}

\def\!placetick#1{%
  \if!xswitch
    \!xpos=#1\relax
    \!ypos=\!axisylevel
  \else
    \!xpos=\!axisxlevel
    \!ypos=#1\relax
  \fi
  \advance\!xpos \!Xsave
  \advance\!ypos \!Ysave
  \kern\!xpos\raise\!ypos\copy\!boxA\kern-\!xpos
  \ignorespaces}

\def\!gettickvaluefrom#1 #2 /{%
  \edef\!value{#1}%
  \edef\!tickvalueslist{#2 /}%
  \ifx \!tickvalueslist\!endtickvaluelist
    \!tickcase=0
  \fi}
\def\!endtickvaluelist{! /}

\def\!ticksincommon{%
  \!ticktransform\!Loc\!t
  \!ticklocation=\!t\!!unit
  \advance\!ticklocation -\!!origin
  \!placetick\!ticklocation
  \ifcase\!tickcase
    \relax 
  \or 
    \ifdim\!ticklocation<-\!!origin
      \edef\!Loc{$\!Loc$}%
    \fi
    \edef\!tickfield{{\the\!ticklocation}{\!Loc}}%
    \expandafter\!listaddon\expandafter{\!tickfield}\!LTlist%
  \or 
    \expandafter\!gettickvaluefrom\!tickvalueslist
    \edef\!tickfield{{\the\!ticklocation}{\!value}}%
    \expandafter\!listaddon\expandafter{\!tickfield}\!LTlist%
  \fi}

\def\!separate#1\!nil{%
  \!ifnextchar{-}{\!!separate}{\!!!separate}#1\!nil}
\def\!!separate-#1\!nil{%
  \def\!sign{-}%
  \!!!!separate#1..\!nil}
\def\!!!separate#1\!nil{%
  \def\!sign{+}%
  \!!!!separate#1..\!nil}
\def\!!!!separate#1.#2.#3\!nil{%
  \def\!arg{#1}%
  \ifx\!arg\!empty
    \!countA=0
  \else
    \!countA=\!arg
  \fi
  \def\!arg{#2}%
  \ifx\!arg\!empty
    \!countB=0
  \else
    \!countB=\!arg
  \fi}
 
\def\!countfigures#1{%
  \if #1/%
    \def\!next{\ignorespaces}%
  \else
    \multiply\!scalefactor 10
    \def\!next{\!countfigures}%
  \fi
  \!next}

\def\!scaleup#1by#2to#3{%
  \expandafter\!separate#1\!nil
  \multiply\!countA #2\relax
  \advance\!countA \!countB
  \if -\!sign
    \!countA=-\!countA
  \fi
  #3=\!countA
  \ignorespaces}

\def\!scaledown#1by#2to#3{%
  \!countA=#1\relax
  \ifnum \!countA<0 
    \def\!sign{-}
    \!countA=-\!countA
  \else
    \def\!sign{}%
  \fi
  \!countB=\!countA
  \divide\!countB #2\relax
  \!countC=\!countB
    \multiply\!countC #2\relax
  \advance \!countA -\!countC
  \edef#3{\!sign\the\!countB.}
  \!countC=\!countA 
  \ifnum\!countC=0 
    \!countC=1
  \fi
  \multiply\!countC 10
  \!loop \ifnum #2>\!countC
    \edef#3{#3\!zero}%
    \multiply\!countC 10
  \repeat
  \edef#3{#3\the\!countA}
  \ignorespaces}

\def\!placetickvalues{%
  \advance\!offset \tickstovaluesleading
  \if!xswitch
    \setbox\!boxA=\hbox{%
      \def\\##1##2{%
        \!dimenput {##2} [B] (##1,\!axisylevel)}%
      \beginpicture 
        \!LTlist
      \endpicturesave <\!Xsave,\!Ysave>}%
    \!dimenA=\!axisylevel
      \advance\!dimenA -\!Ysave
      \advance\!dimenA \!tickysign\!offset
      \if -\!tickysign
        \advance\!dimenA -\ht\!boxA
      \else
        \advance\!dimenA  \dp\!boxA
      \fi
    \advance\!offset \ht\!boxA 
      \advance\!offset \dp\!boxA
    \!dimenput {\box\!boxA} [Bl] <\!Xsave,\!Ysave> (\!zpt,\!dimenA)
  \else
    \setbox\!boxA=\hbox{%
      \def\\##1##2{%
        \!dimenput {##2} [r] (\!axisxlevel,##1)}%
      \beginpicture 
        \!LTlist
      \endpicturesave <\!Xsave,\!Ysave>}%
    \!dimenA=\!axisxlevel
      \advance\!dimenA -\!Xsave
      \advance\!dimenA \!tickxsign\!offset
      \if -\!tickxsign
        \advance\!dimenA -\wd\!boxA
      \fi
    \advance\!offset \wd\!boxA
    \!dimenput {\box\!boxA} [Bl] <\!Xsave,\!Ysave> (\!dimenA,\!zpt)
  \fi}

\normalgraphs
\catcode`!=12 


 
\catcode`@=11 \catcode`!=11
  
\let\!pictexendpicture=\endpicture 
\let\!pictexframe=\frame
\let\!pictexlinethickness=\linethickness
\let\!pictexmultiput=\multiput
\let\!pictexput=\put

\def\beginpicture{%
  \setbox\!picbox=\hbox\bgroup%
  \let\endpicture=\!pictexendpicture
  \let\frame=\!pictexframe
  \let\linethickness=\!pictexlinethickness
  \let\multiput=\!pictexmultiput
  \let\put=\!pictexput
  \let\input=\@@input   
  \!xleft=\maxdimen  
  \!xright=-\maxdimen
  \!ybot=\maxdimen
  \!ytop=-\maxdimen}

\let\frame=\!latexframe

\let\pictexframe=\!pictexframe

\let\linethickness=\!latexlinethickness
\let\pictexlinethickness=\!pictexlinethickness

\let\\=\@normalcr
\catcode`@=12 \catcode`!=12

\newtheorem{thm}{Theorem}[section]
\newtheorem{lem}[thm]{Lemma}

\newtheorem{rmk}[thm]{Remark}
\numberwithin{equation}{section}

\newcommand{\x}{{\tt x}}

\begin{document}

\title{Determination of a Type of Permutation Binomials over Finite Fields}

\author[Xiang-dong Hou]{Xiang-dong Hou*}
\address{Department of Mathematics and Statistics,
University of South Florida, Tampa, FL 33620}
\email{xhou@usf.edu}
\thanks{* Research partially supported by NSA Grant H98230-12-1-0245.}

\author{Stephen D. Lappano}
\address{Department of Mathematics and Statistics,
University of South Florida, Tampa, FL 33620}
\email{slappano@mail.usf.edu}

\keywords{binomial, finite field, Hermite criterion, permutation polynomial}

\subjclass[2000]{11T06, 11T55}

\begin{abstract}
Let $f=a\x+\x^{3q-2}\in\Bbb F_{q^2}[\x]$, where $a\in\Bbb F_{q^2}^*$. We prove that $f$ is a permutation polynomial of $\Bbb F_{q^2}$ if and only if one of the following occurs: (i) $q=2^e$, $e$ odd, and $a^{\frac{q+1}3}$ is a primitive $3$rd root of unity. (ii) $(q,a)$ belongs to a finite set which is determined in the paper.
\end{abstract}

\maketitle

\section{Introduction}

A polynomial $f\in\Bbb F_q[\x]$ is called a {\em permutation polynomial} (PP) of $\Bbb F_q$ if it induces a permutation of $\Bbb F_q$. While permutation monomials of $\Bbb F_q$ are obvious ($a\x^n$, $a\in\Bbb F_q^*$, $\text{gcd}(n,q-1)=1$), the situation for permutation binomials is much more interesting and challenging. The reason for a binomial to be a PP can be quite nontrivial despite the simple appearance of the binomial. In \cite{Car-Wel66}, Carlitz and Wells proved that for fixed integers $e>1$ and $c> 0$, when $q$ is large enough and satisfies the conditions $e\mid q-1$ and $\text{gcd}(c,q-1)=1$, there exists $a\in\Bbb F_q^*$ such that $\x^c(\x^{\frac{q-1}e}+a)^k$ is a PP of $\Bbb F_q$ for all $k\ge 0$. (Note that when $k=1$, the PP is a binomial.) The special cases of this result with $c=k=1$ and $e=2,3$ appeared in \cite{Car62}. Carlitz and Wells' proof of the existence result relies on a bound on the Weil sum of a multiplicative character of $\Bbb F_q$ \cite{Wei41}, \cite[Theorem~5.39]{LN}. Using the Hasse-Weil bound on the number of degree one places of a function field over $\Bbb F_q$ \cite[Theorem~V.2.3]{Sti93}, Masuda and Zieve \cite{Mas-Zie09} were able to make Carlitz-Wells' existence result (with $k=1$) more precise. They proved that if $q\ge 4$ and $\frac{q-1}e>2q(\log\log q)/\log q$, then there exists $a\in\Bbb F_q^*$ such that $\x^c(\x^{\frac{q-1}e}+a)$ is a PP of $\Bbb F_q$. Moreover, they obtained an estimate for the number of $a$'s with this property.

There are also nonexistence results on permutation binomials. Niederreiter and Robinson \cite{Nie-Rob82} proved that if there is a PP of $\Bbb F_q$ of the form $\x^m+a\x$, where $m>2$ and $a\in\Bbb F_q^*$, then either $m$ is a power of $p$ ($p=\text{char}\,\Bbb F_q$) or $q<(m^2-4m+6)^2$. An improvement of this result was obtained by Turnwald \cite{Tur88}: If there is a PP of $\Bbb F_q$ of the form $\x^m+a\x^n$, where $m>n>0$ and $a\in\Bbb F_q^*$, then either $\frac mn$ is a power of $p$ or $q\le(m-2)^4+4m-4$. For permutation binomials over prime fields, the nonexistence results are stronger. Wan \cite{Wan87} proved that if there is a PP of $\Bbb F_p$ of the form $\x^m+a\x$, where $m>1$ and $a\in\Bbb F_p^*$, then $p-1\le(m-1)\text{gcd}(m-1,p-1)$. Turnwald \cite{Tur88} considered $f=\x^m+a\x^n\in\Bbb F_p[\x]$, where $m>n>0$ and $a\in\Bbb F_p^*$, and proved that $f$ is a PP of $\Bbb F_p$ implies $p<m\cdot\max(n,m-n)$. Masuda and Zieve \cite{Mas-Zie09} improved Turnwald's bound to $p-1<(m-1)\cdot\max\{n,\text{gcd}(m-n,p-1)\}$.

Let $r\ge 2$. In \cite{Car62}, Carlitz proved that the binomial $\x^{1+\frac{q-1}2}+a\x$ ($q$ odd, $a\ne0$) cannot be a PP of $\Bbb F_{q^r}$, and he raised the same question for $\x^{1+\frac{q-1}3}+a\x$ ($q\equiv 1\pmod 3$, $a\ne 0$). 
Wan \cite{Wan87, Wan94} answered Carlitz's question by showing that $\x^{1+\frac{q-1}3}+a\x$ ($q\equiv 1\pmod 3$, $a\ne 0$) cannot be a PP of $\Bbb F_{p^r}$. Kim and Lee \cite{Kim-Lee95} proved that $\x^{1+\frac{q-1}5}+a\x$ ($q\equiv 1\pmod 5$, $a\ne 0$) cannot be a PP of $\Bbb F_{q^r}$ for $p\ne 2$. 
More generally, one may consider $\x^{1+\frac{q-1}m}+a\x\in\Bbb F_q[\x]$, where $q\equiv 1\pmod m$, $m\ge 2$, $a\ne 0$. Clearly, if $m=\frac{q-1}{p^i-1}$, where $\Bbb F_{p^i}\subset\Bbb F_q$, then $\x^{1+\frac{q-1}m}+a\x=\x^{p^i}+a\x$, which is a PP of $\Bbb F_{q^r}$ if and only if $(-a)^{(q^r-1)/(p^i-1)}\ne 1$. When $1+\frac{q-1}m$ is not a power of $p$, it is not known if the binomial can be a PP of $\Bbb F_{q^r}$. 

Let $f=\x^m+a\x^n\in\Bbb F_q[\x]$, where $m>n>0$ and $a\in\Bbb F_q^*$. The conditions that make $f$ a PP of $\Bbb F_q$ are encoded in a simple set of parameters $m,n,q,a$ in a mysterious way that is not well understood on the whole. However, when $m$ and $n$ take certain particular forms, necessary and sufficient conditions for $f$ to be a PP of $\Bbb F_q$ have been found. Niederreiter and Robinson \cite{Nie-Rob82} proved that $\x^{\frac{q+1}2}+a\x\in\Bbb F_q[x]$ ($q$ odd, $a\in\Bbb F_q^*$) is a PP of $\Bbb F_q$ if and only if $a^2-1$ is a square in $\Bbb F_q^*$; also see \cite{Car62}. Akbary and Wang \cite{Akb-Wan06} considered binomials of the form $f=\x^r(1+\x^{es})$, where $e,r,s$ are positive integers such that $s\mid q-1$, $\text{gcd}(r,s)=1$, $\text{gcd}(2e,\frac{q-1}s)=1$. They found sufficient conditions for $f$ to be a PP of $\Bbb F_q$ in terms of the period of the generalized Lucas sequence. The conditions are not entirely explicit, but their special cases do give explicit classes of permutation binomials of $\Bbb F_q$. The sufficient conditions in \cite{Akb-Wan06} were later extended by Wang \cite{Wan07} to conditions that are both necessary and sufficient. Zieve \cite{Zie09} considered $f=\x^m+a\x^n\in\Bbb F_q[\x]$, where $m>n>0$ and $a\in\Bbb F_q^*$, under the assumption that $\{\eta+\frac 1\eta:\eta\in\mu_{2d}\}\subset\mu_s$, where $s=\text{gcd}(m-n,q-1)$, $d=\frac{q-1}s$ and $\mu_s=\{x\in\Bbb F_q:x^s=1\}$. In this setting, it was shown that $f$ is a PP of $\Bbb F_q$ if and only $-a\notin\mu_d$, $\text{gcd}(n,s)=1$ and $\text{gcd}(2d,m+n)\le 2$. (This characterization implies the aforementioned sufficient conditions in \cite{Akb-Wan06}; see \cite{Zie09}.) 

Recently, we determined all PPs of $\Bbb F_{q^2}$ of the form $a\x+\x^{2q-1}$ ($a\in\Bbb F_{q^2}^*$) \cite{Hou13,Hou-p}. The techniques introduced there are applicable to binomials of similar types. In the present paper, we determine all PPs of $\Bbb F_{q^2}$ of the form $a\x+\x^{3q-2}$. Our main result is the following.

\begin{thm}\label{T1.1}
Let $f=a\x+\x^{3q-2}\in\Bbb F_{q^2}[\x]$, where $a\in\Bbb F_{q^2}^*$. Then $f$ is a PP of $\Bbb F_{q^2}$ if and only if one of the following occurs.
\begin{itemize}
  \item [(i)] $q=2^{2k+1}$ and $a^{\frac{q+1}3}$ is a primitive $3$rd root of unity. 
  \item [(ii)] $q=5$ and $a^2$ is a root of $(\x+1)(\x+2)(\x-2)(\x^2-\x+1)$.
  \item [(iii)] $q=2^3$ and $a^3$ is a root of $\x^3+\x+1$.
  \item [(iv)] $q=11$ and $a^4$ is a root of $(\x-5)(\x+2)(\x^2-\x+1)$.
  \item [(v)] $q=17$ and $a^6=4,5$.
  \item [(vi)] $q=23$ and $a^8=-1$.
  \item [(vii)] $q=29$ and $a^{10}=-3$.
\end{itemize}  
\end{thm}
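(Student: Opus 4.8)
The plan is to reduce the problem to a permutation question on the cyclic group $\mu_{q+1}=\{y\in\Bbb F_{q^2}^*:y^{q+1}=1\}$. Since $3q-2=3(q-1)+1$, we may write $f=\x\bigl(a+(\x^{q-1})^3\bigr)$, so $f=\x\,h(\x^{q-1})$ with $h(\x)=a+\x^3$. By the standard multiplicative reduction for polynomials $\x^r h(\x^{(q^2-1)/d})$, $f$ is a PP of $\Bbb F_{q^2}$ if and only if $a+y^3\ne0$ for every $y\in\mu_{q+1}$ (otherwise $f$ has a nonzero root and is not injective) and the induced map $g(y)=y(a+y^3)^{q-1}$ permutes $\mu_{q+1}$. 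For $y\in\mu_{q+1}$ we have $y^q=y^{-1}$, so
\begin{equation*}
g(y)=y\,\frac{a^q+y^{-3}}{a+y^3}=\frac{a^qy^3+1}{y^2(a+y^3)},
\end{equation*}
and from here everything is about this rational self-map of $\mu_{q+1}$.

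First I would isolate the infinite family. If $N(a)=a^{q+1}=1$, then $a^q=a^{-1}$ and the displayed expression collapses to the monomial $g(y)=a^{-1}y^{-2}$, which permutes $\mu_{q+1}$ precisely when $\gcd(2,q+1)=1$, i.e. when $q$ is even. The nonvanishing requirement $a+y^3\ne0$ then forces $3\mid q+1$ and $a^{(q+1)/3}\ne1$; since $a\in\mu_{q+1}$ gives $a^{(q+1)/3}\in\mu_3$, this is exactly case (i). This establishes the sufficiency of (i), while the sufficiency of (ii)–(vii) is a direct evaluation for the finitely many listed pairs.

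For the necessity I would first exploit a symmetry that collapses the parameter. For $\lambda\in\mu_{q+1}$ the substitution $y\mapsto\lambda y$ gives $g_a(\lambda y)=\lambda^{-5}g_{a\lambda^{-3}}(y)$, so $g_a$ permutes $\mu_{q+1}$ if and only if $g_{a\lambda^{-3}}$ does. Thus the property depends only on $a$ modulo $(\mu_{q+1})^3$, i.e. only on the invariant $A:=a^{(q+1)/3}$, which satisfies $A^3=N(a)\in\Bbb F_q^*$; this explains why (i)–(vii) are phrased through $a^{(q+1)/3}$ and cuts the search space for each $q$ down to the finitely many admissible $A$. When $3\nmid q+1$ the cube map is a bijection of $\mu_{q+1}$ and the same symmetry reduces the dependence to $N(a)$ alone; a direct argument (for instance comparing $\prod_{y\in\mu_{q+1}}g(y)$ with $\prod_{y\in\mu_{q+1}}y$) then shows $g$ cannot permute $\mu_{q+1}$, so no PP can occur unless $q\equiv2\pmod3$.

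It remains to bound $q$ in the generic range $N(a)\ne1$, $q\equiv2\pmod3$, where $g$ is a genuine degree-five map. Here $g$ permutes $\mu_{q+1}$ if and only if for each $c\in\mu_{q+1}$ the equation $cy^5-a^qy^3+cay^2-1=0$ has exactly one root in $\mu_{q+1}$. Writing the constraint $y\in\mu_{q+1}$ through $\bar y=y^{-1}$, a collision $g(y_1)=g(y_2)$ with $y_1\ne y_2$ corresponds to an $\Bbb F_q$-rational point on an explicit affine curve $\mathcal C$ over $\Bbb F_q$ built from $a$ and $a^q$. The crux is to prove that the relevant component of $\mathcal C$ is absolutely irreducible and to estimate its genus, after which the Hasse–Weil bound guarantees a collision point once $q$ exceeds an explicit constant, so that $g$ fails to permute $\mu_{q+1}$ for all large $q$. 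I expect this irreducibility-and-genus analysis to be the main obstacle, because $\mathcal C$ varies with the parameters and the finitely many degenerate configurations must be separated out by hand. Once $q$ is bounded, a finite search over the admissible values of $A=a^{(q+1)/3}$ for each remaining $q$ yields precisely the sporadic cases (ii)–(vii) and confirms that (iii) is the only additional PP at $q=8$ beyond the family (i).
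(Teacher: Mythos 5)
Your reduction is correct and genuinely different from the paper's: you write $f=\x\,h(\x^{q-1})$ with $h(\x)=a+\x^3$ and pass to the induced map $g(y)=y\,h(y)^{q-1}$ on $\mu_{q+1}$, whereas the paper works throughout with Hermite's criterion and the power sums $S_q(\alpha,a)$. Your treatment of the subfamily $a^{q+1}=1$ is also correct, and arguably more transparent than the paper's Lemma~\ref{L3.1}: there $g$ collapses to $y\mapsto a^{-1}y^{-2}$, which settles both the sufficiency of (i) and necessity within that subfamily. But the necessity argument outside this subfamily has two genuine gaps. First, your proposed product comparison cannot rule out $3\nmid q+1$. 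Indeed, $\prod_{y\in\mu_{q+1}}g(y)=\bigl(\prod_{y}y\bigr)\cdot\prod_{y}(a+y^3)^{q-1}$, and when $\gcd(3,q+1)=1$ the cube map permutes $\mu_{q+1}$, so (using $-\mu_{q+1}=\mu_{q+1}$) one gets $\prod_{y}(a+y^3)=\prod_{z\in\mu_{q+1}}(a-z)=a^{q+1}-1$, an element of $\Bbb F_q$; hence $\prod_{y}(a+y^3)^{q-1}=1$ whenever $a^{q+1}\ne1$, while if $a^{q+1}=1$ then $f$ has a nonzero root anyway. So in every case that matters the two products agree automatically and the comparison is vacuous. (The paper's Lemma~\ref{L2.1} proves this step via Hermite's criterion with $\alpha=0$: when $q\not\equiv2\pmod3$ exactly one of $(2q+1)/3$, $q/3$ is an integer, so exactly one term survives in \eqref{2.6} and the relevant power sum is nonzero.)

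The second gap is the decisive one: for $a^{q+1}\ne1$, the entire bound on $q$ rests on your claim that the collision curve $\mathcal C$ attached to $g(y_1)=g(y_2)$, $y_1\ne y_2$, is absolutely irreducible of controlled genus, and you state yourself that you have not carried out this analysis. This step is not routine: $\mathcal C$ depends on the parameter $a$, the constraint $y\in\mu_{q+1}$ must first be rationalized (e.g.\ by a degree-one parametrization of $\mu_{q+1}$ by $\Bbb P^1(\Bbb F_q)$) before Hasse--Weil can be applied over $\Bbb F_q$, and reducible or degenerate parameter configurations genuinely occur and must be classified --- this is essentially the content of \cite{Zie13} on R\'edei-type bijections, not a lemma one can cite in a line. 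Without it you have no bound on $q$, hence no finite search, hence none of the cases (ii)--(vii) of Theorem~\ref{T1.1}. The paper replaces this entire step by explicit computation: it evaluates $S_q(\alpha,a)$ for $\alpha=2,5,8,11,14$ via Lemma~\ref{L2.3}, obtaining polynomials $g_\alpha$ in the invariant $y=a^{(q+1)/3}$ that must all vanish as in \eqref{3.7}; the resultant $R(g_2,g_5)=2^5\cdot3^{35}\cdot17^2\cdot23\cdot29\cdot103\cdot16069$ together with $q\equiv2\pmod3$ forces $p\in\{2,17,23,29\}$, and gcd computations plus small searches then pin down the sporadic cases. To complete your route you would need either to carry out the irreducibility/genus analysis (or invoke a classification of low-degree exceptional rational functions), or to graft in Hermite-type computations like the paper's at exactly this point.
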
  

\noindent{\bf Note.} Shortly after the submission of the first vision of this paper, we were informed by M. Zieve of two very recent papers \cite{TZHL} by Tu, Zeng, Hu and Li and \cite{Zie13} by Zieve in the arXiv. We remark that under the assumption that $a$ is a $(q+1)$st root of unity, Theorem~\ref{T1.1} (i), which is the only relevant case under this assumption, follows from \cite[Corollary~5.3]{Zie13}. If, in addition, $q$ is assumed to be even, then \cite[Theorem~1]{TZHL} also gives sufficiency part of Theorem~\ref{T1.1} (i). 


\section{Preliminaries}

Let $f=a\x+\x^{3q-2}\in\Bbb F_{q^2}[\x]$, where $a\in\Bbb F_{q^2}^*$, and let $0\le\alpha,\beta\le q-1$. We have 
\begin{equation}\label{2.1}
\begin{split}
\sum_{x\in\Bbb F_{q^2}}f(x)^{\alpha+\beta q}
=\;&\sum_{x\in\Bbb F_{q^2}^*}(ax+x^{3q-2})^\alpha(a^qx^q+x^{3-2q})^\beta\cr
=\;&\sum_{x\in\Bbb F_{q^2}^*}\sum_{i,j}\binom\alpha i(ax)^{\alpha-i}x^{(3q-2)i}\binom\beta j(a^qx^q)^{\beta-j}x^{(3-2q)j}\cr
\end{split}
\end{equation}
\[
\kern 2.5cm =a^{\alpha+\beta q}\sum_{i,j}\binom\alpha i\binom\beta ja^{-i-jq}\sum_{x\in\Bbb F_{q^2}^*}x^{\alpha+\beta q+3(q-1)(i-j)}.
\]
The inner sum is $0$ unless $\alpha+\beta q\equiv 1\pmod{q-1}$, i.e., $\alpha+\beta=q-1$.

Assume $0\le \alpha\le q-1$ and $\beta=q-1-\alpha$ in \eqref{2.1}. We have
\begin{equation}\label{2.2}
\begin{split}
&\sum_{x\in\Bbb F_{q^2}}f(x)^{\alpha+(q-1-\alpha)q}\cr
=\;&a^{(\alpha+1)(1-q)}\sum_{i,j}\binom\alpha i\binom{q-1-\alpha}ja^{-i-jq}\sum_{x\in\Bbb F_{q^2}^*}x^{(q-1)[-\alpha-1+3(i-j)]}\cr
=\;&-a^{(\alpha+1)(1-q)}\sum_{-\alpha-1+3(i-j)\equiv 0\, (\text{mod}\, q+1)}\binom\alpha i\binom{q-1-\alpha}ja^{-i-jq}.
\end{split}
\end{equation}
As $i$ runs over the interval $[0,\alpha]$ and $j$ over the interval $[0,q-1-\alpha]$, the range of $-\alpha-1+3(i-j)$ is 
\begin{equation}\label{2.3}
I_\alpha:=[2\alpha+2-3q,\ \alpha-1].
\end{equation}
Thus we have 
\begin{equation}\label{2.4}
\sum_{x\in\Bbb F_{q^2}}f(x)^{\alpha+(q-1-\alpha)q}=-a^{(\alpha+1)(1-q)}S_q(\alpha,a),
\end{equation}
where
\begin{equation}\label{2.4a}
S_q(\alpha,a)= \sum_{-\alpha-1+3(i-j)\in I_\alpha\cap(q+1)\Bbb Z}\binom\alpha i\binom{q-1-\alpha}ja^{-i-jq}.
\end{equation}
By Hermite's criterion, $f$ is a PP of $\Bbb F_{q^2}$ if and only if  $0$ is the only root of $f$ and
\begin{equation}\label{2.5}
S_q(\alpha,a)=0\quad \text{for all}\ 0\le \alpha\le q-1.
\end{equation}

\begin{lem}\label{L2.1}
If $f$ is a PP of $\Bbb F_{q^2}$, then $q+1\equiv 0\pmod 3$.
\end{lem}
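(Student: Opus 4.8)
The plan is to derive the conclusion from a single instance of the Hermite-type criterion \eqref{2.5}, namely the case $\alpha=0$. Suppose, for contradiction, that $f$ is a PP of $\Bbb F_{q^2}$ but $3\nmid q+1$. Then \eqref{2.5} forces $S_q(0,a)=0$, and I aim to show that this sum is in fact a single nonzero term, which is the contradiction sought.

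First I would specialize \eqref{2.4a} to $\alpha=0$. Since $\binom 0i=0$ for $i>0$, only $i=0$ contributes, so the defining congruence $-\alpha-1+3(i-j)\equiv 0\pmod{q+1}$ collapses to $3j\equiv -1\pmod{q+1}$, with $j$ ranging over $[0,q-1]$; note that the membership $-1-3j\in I_0=[2-3q,-1]$ is then automatic. Thus $S_q(0,a)=\sum_j\binom{q-1}j a^{-jq}$, the sum taken over those $j\in[0,q-1]$ satisfying $3j\equiv -1\pmod{q+1}$. The key counting step is that when $3\nmid q+1$, i.e. $\gcd(3,q+1)=1$, this congruence has a unique solution modulo $q+1$. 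Since $\{0,1,\dots,q\}$ is a complete residue system mod $q+1$, there is exactly one solution $j_0$ there; and because $3q\equiv -3\not\equiv -1\pmod{q+1}$ (using $q+1\ge 3$), we have $j_0\ne q$, so $j_0\in[0,q-1]$. Hence $S_q(0,a)$ reduces to the single term $\binom{q-1}{j_0}a^{-j_0 q}$.

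Finally I would observe that $\binom{q-1}j\equiv(-1)^j$ in $\Bbb F_q$ for $0\le j\le q-1$, which follows at once from $(1+\x)^{q-1}=(1+\x)^q/(1+\x)=(1+\x^q)/(1+\x)=\sum_{k=0}^{q-1}(-1)^k\x^k$ in $\Bbb F_q[\x]$. Therefore $S_q(0,a)=(-1)^{j_0}a^{-j_0 q}\ne 0$ because $a\ne 0$, contradicting $S_q(0,a)=0$. This proves $3\mid q+1$. I do not anticipate a genuine obstacle here: the only points that require care are the residue-counting argument guaranteeing a unique admissible $j_0$ in $[0,q-1]$ and the nonvanishing of the surviving binomial coefficient, both of which are elementary.
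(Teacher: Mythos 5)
Your proof is correct and follows essentially the same route as the paper: both apply the Hermite-type condition \eqref{2.5} with $\alpha=0$ and show that when $3\nmid q+1$ the sum $S_q(0,a)$ collapses to a single term $\pm a^{-j_0q}\ne 0$, a contradiction. The only cosmetic differences are that you locate the unique surviving index $j_0$ by solving the congruence $3j\equiv -1\pmod{q+1}$ uniformly in $q$, whereas the paper enumerates the multiples of $q+1$ in $I_0$ (treating $q=3$ separately), and that you explicitly justify $\binom{q-1}{j}\equiv(-1)^j\pmod p$, a fact the paper uses implicitly.
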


\begin{proof}
Assume $f$ is a PP of $\Bbb F_{q^2}$ and assume $q\ge 3$. If $q=3$, the only multiple of $q+1$ in $I_0=[2-3q,-1]$ is $-(q+1)$. By \eqref{2.5} (with $\alpha=0$), we have
\[
0=\sum_{-1-3j=-(q+1)}\binom{q-1}ja^{-jq}=-a^{-q},
\]
which is a contradiction. Now assume $q\ge 4$. The multiples of $q+1$ in $I_0=[2-3q,-1]$ are $-2(q+1)$ and $-(q+1)$. By \eqref{2.5} (with $\alpha=0$),
\begin{equation}\label{2.6}
0=\sum_{-1-3j=-2(q+1),-(q+1)}\binom{q-1}ja^{-jq}=\binom{q-1}{\frac{2q+1}3}^*a^{-\frac{2q+1}3q}+\binom{q-1}{\frac q3}^*a^{-\frac q3},
\end{equation}
where
\[
\binom mn^*=
\begin{cases}
\displaystyle\binom mn&\text{if}\ n\in\Bbb N,\vspace{2mm}\cr
0&\text{otherwise}.
\end{cases}
\]
If, to the contrary, $q+1\not\equiv 0\pmod 3$, then exactly one of $\binom{q-1}{\frac{2q+1}3}^*$ and $\binom{q-1}{\frac q3}^*$ is nonzero, and hence \eqref{2.6} cannot hold.
\end{proof}   

\begin{rmk}\label{R2.2}\rm
Assume $q+1\equiv 0\pmod 3$.
\begin{itemize}
  \item [(i)] $0$ is the only root of $f$ in $\Bbb F_{q^2}$ if and only if $a^{\frac{q+1}3}\ne 1$. (Note that $(-1)^{\frac{q+1}3}=1$.)
  \item [(ii)] For $b\in\Bbb F_{q^2}^*$, we have $f(b\x)=b^{3q-2}(b^{3(1-q)}a\x+\x^{3q-2})$. Thus if $f$ is a PP of $\Bbb F_{q^2}$ for $a=a_0$, the same is true for $a=\epsilon a_0$, where 
$\epsilon\in\Bbb F_{q^2}$, $\epsilon^{\frac{q+1}3}=1$.
\end{itemize} 
\end{rmk}

\begin{lem}\label{L2.3}
Assume $q+1\equiv 0\pmod 3$, $\alpha>0$, $\alpha+1\equiv 0\pmod 3$, $q\ge 2\alpha+4$. Then 
\begin{equation}\label{2.7}
S_q(\alpha,a)=(-a)^{\frac{q+1}3 q}\sum_{i=0}^\alpha(-1)^i\binom\alpha i\biggl[\binom{i+\frac{2\alpha-1}3}\alpha v^{3i}+\binom{i+\frac{2\alpha}3}\alpha v^{3i+1}+\binom{i+\frac{2\alpha+1}3}\alpha v^{3i+2}\biggr],
\end{equation}
where $S_q(\alpha,a)$ is defined in \eqref{2.4a} and $v=a^{-\frac{q+1}3}$.
\end{lem}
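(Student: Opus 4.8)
The plan is to evaluate the sum \eqref{2.4a} by making its index set explicit, reducing the two binomial factors in characteristic $p$, and then collecting the powers of $a$ into a single prefactor times a power of $v$. Throughout I would write $m=\frac{\alpha+1}3$ and $n=\frac{q+1}3$, both integers by hypothesis, so that $v^3=a^{-3n}=a^{-(q+1)}$.

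First I would identify the surviving indices. The condition $-\alpha-1+3(i-j)\in(q+1)\Bbb Z$ is equivalent to $i-j=m+kn$, where $k(q+1)$ is the corresponding multiple of $q+1$; I would then use $\alpha\ge2$ (forced by $\alpha>0$ and $3\mid\alpha+1$) together with the hypothesis $q\ge 2\alpha+4$ to check the endpoints of $I_\alpha$ and show that exactly $k\in\{0,-1,-2\}$ contribute. The inequality $q\ge 2\alpha+4$ is precisely the threshold keeping $k=-2$ inside $I_\alpha$ while excluding $k=-3$, and $k\ge1$ is ruled out by $\alpha\le q-1$. This decomposes $S_q(\alpha,a)$ into three sums over $i$, with $j=i-m-kn$ in the $k$-th sum.

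Next I would simplify each summand with two tools. For the factor depending on $j$ I would use $(1+\x)^q=1+\x^q$ in $\Bbb F_p[\x]$, writing $(1+\x)^{q-1-\alpha}=(1+\x^q)(1+\x)^{-1-\alpha}$ and reading coefficients below degree $q$, to obtain $\binom{q-1-\alpha}j\equiv(-1)^j\binom{\alpha+j}\alpha\pmod p$ for $0\le j\le q-1$. For the power of $a$, the key structural fact is that $v^3=a^{-(q+1)}$ is the inverse norm of $a$ and hence lies in $\Bbb F_q^*$; thus $a^{-i(q+1)}=v^{3i}$, and the three families $k=0,-1,-2$ should feed the three residue classes $v^{3i},v^{3i+1},v^{3i+2}$ once the leftover power of $a$ is pulled out as a common factor. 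Simultaneously, since $3n=q+1$, reducing the arguments of $\binom{\alpha+j}\alpha=\binom{\,i+\frac{2\alpha-1}3-kn\,}\alpha$ via Lucas' theorem (using $\alpha<q=p^e$, so that $n$ plays the role of $\frac13$) should turn the integer shifts $-kn$ into the fractional shifts $\frac{0,1,2}3$, producing exactly the generalized coefficients $\binom{i+\frac{2\alpha-1}3}\alpha$, $\binom{i+\frac{2\alpha}3}\alpha$, $\binom{i+\frac{2\alpha+1}3}\alpha$ of \eqref{2.7}; I would verify this reduction directly from the definition of $\binom{x}\alpha$ as a polynomial.

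The step I expect to be the main obstacle is the bookkeeping that fuses these pieces into the single prefactor $(-a)^{\frac{q+1}3q}$ and the signs $(-1)^i$ of \eqref{2.7}. Because $v$ lies in $\Bbb F_{q^2}$ rather than in $\Bbb F_q$, one only has $v^q=\zeta v$ for a cube root of unity $\zeta=v^{q-1}$, so the high powers of $v$ arising from the $k=-1$ and $k=-2$ families must be collapsed back onto $v^{3i+1}$ and $v^{3i+2}$ while carefully tracking the accompanying powers of $a$, the cube-root-of-unity factors, and the signs $(-1)^{m+kn}$. Checking that all of these recombine consistently into one uniform prefactor across the three residue classes is the delicate heart of the computation, and it is here that the exact shape of the prefactor together with the generalized binomial coefficients gets pinned down.
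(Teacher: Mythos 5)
Your plan retraces the paper's own proof: the same three surviving classes $-\alpha-1+3(i-j)=0,-(q+1),-2(q+1)$, the same congruence $\binom{q-1-\alpha}j\equiv(-1)^j\binom{\alpha+j}\alpha\pmod p$, and the same conversion of the integer shifts $l\,\frac{q+1}3$ into the fractional shifts $\frac l3$ inside the binomial coefficients. The difficulty is the one step you postpone, and it is a genuine gap: the cube roots of unity you identify do \emph{not} recombine into a uniform prefactor, and no amount of bookkeeping can make them do so, because \eqref{2.7} itself is false in general. With $j=\frac13(l(q+1)-\alpha-1)+i$ in the class $l$, the exact exponent is
\[
-i-jq=\tfrac{\alpha+1}3\,q-i(q+1)-\tfrac{l(q+1)}3\,q,
\]
so each term carries $v^{3i}(v^q)^l=\zeta^lv^{3i+l}$, where $\zeta=v^{q-1}=a^{-\frac{q^2-1}3}$ equals $1$ exactly when $a$ is a cube in $\Bbb F_{q^2}^*$. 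Since $v^3=a^{-(q+1)}\in\Bbb F_q$ and all coefficients lie in $\Bbb F_p$, what the computation actually proves is
\[
S_q(\alpha,a)=(-a)^{\frac{\alpha+1}3q}\,G(v)^q,\qquad
G(v)=\sum_{i=0}^\alpha(-1)^i\binom\alpha i\Bigl[\binom{i+\frac{2\alpha-1}3}\alpha v^{3i}+\binom{i+\frac{2\alpha}3}\alpha v^{3i+1}+\binom{i+\frac{2\alpha+1}3}\alpha v^{3i+2}\Bigr],
\]
which differs from \eqref{2.7} by the Frobenius power $q$ (and also in the prefactor: compare \eqref{3.6}; the exponent $\frac{q+1}3q$ in \eqref{2.7} should read $\frac{\alpha+1}3q$). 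For a concrete counterexample take $q=8$, $\alpha=2$, so $v=a^{-3}$: in characteristic $2$, definition \eqref{2.4a} gives $S_8(2,a)=a^{-10}+a^{-34}+a^{-40}$, while the right-hand side of \eqref{2.7} reduces to $a^{24}(v^2+v^6+v^7)=a^{18}+a^6+a^3$; the six exponents are distinct modulo $63$, so the difference of the two sides is a nonzero polynomial of degree less than $63$ and cannot vanish on all of $\Bbb F_{64}^*$.

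You have in fact located precisely the spot where the published proof errs: in passing to the exponent $\frac{\alpha+1}3q-\frac{q+1}3(l+3i)$, the paper silently replaces $a^{-\frac{l(q+1)}3q}$ by $a^{-\frac{l(q+1)}3}$, discarding $\zeta^l$. The damage is confined to the statement of the lemma: since $x\mapsto x^q$ is a bijection of $\Bbb F_{q^2}$, the corrected identity still yields that $S_q(\alpha,a)=0$ if and only if $G(v)=0$, and this equivalence is the only way Lemma~\ref{L2.3} enters \eqref{3.6}--\eqref{3.7}, so Theorem~\ref{T1.1} is unaffected. To complete your argument, prove the displayed identity instead of \eqref{2.7}. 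Then the only remaining point is your Lucas-type reduction $\binom{i+\frac{2\alpha-1}3+l\frac{q+1}3}\alpha\equiv\binom{i+\frac{2\alpha-1+l}3}\alpha\pmod p$; it is true but needs an argument valid also when $p\le\alpha$, e.g.\ Vandermonde's identity $\binom{x+\frac{lq}3}\alpha=\sum_{j\ge0}\binom{\frac{lq}3}j\binom x{\alpha-j}$, whose terms with $j\ge1$ have positive $p$-adic valuation because $q$ is a power of $p$ exceeding $\alpha$.
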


\begin{proof}
Since $\alpha>0$ and $q\ge 2\alpha+4$, the multiples of $q+1$ in $I_\alpha=[2\alpha+2-3q,\,2\alpha-1]$ are $-2(q+1),-(q+1),0$. We have
\[
\begin{split}
&\text{LHS of \eqref{2.7}}\cr
=\;&\sum_{-\alpha-1+3(i-j)=-2(q+1),-(q+1),0}\binom\alpha i\binom{q-1-\alpha} ja^{-i-jq}\cr
=\;&\sum_{i=0}^\alpha\binom\alpha i\sum_{l=0}^2\binom{-1-\alpha}{\frac 13(l(q+1)-\alpha-1)+i} a^{-i-[\frac13(l(q-1)-\alpha-1)+i]q}\cr
=\;&\sum_{i=0}^\alpha\binom\alpha i\sum_{l=0}^2(-1)^{\frac13(l(q+1)-\alpha-1)+i}\binom{\frac13(l(q+1)-\alpha-1)+i+\alpha}\alpha a^{\frac{\alpha+1}3q-\frac{q+1}3(l+3i)}\cr
&\kern5.2cm (\binom{-m}n=(-1)^n\binom{n+m-1}{m-1}\ \text{for}\ m,n\in\Bbb N)\cr
=\;&(-a)^{\frac{q+1}3 q}\sum_{i=0}^\alpha(-1)^i\binom\alpha i\sum_{l=0}^2\binom{i+\frac{2\alpha-1+l}3}\alpha v^{3i+l}.
\end{split}
\]
\end{proof}


\section{Proof of Theorem~\ref{T1.1}}

\begin{lem}\label{L3.1}
Assume $q+1\equiv 0\pmod 3$ and $y:=a^{\frac{q+1}3}$ is a primitive $3$rd root of unity. Then for $1\le s\le q^2-2$, 
\begin{equation}\label{3.1}
\sum_{x\in\Bbb F_{q^2}}f(x)^s=
\begin{cases}
\displaystyle a^{-\frac 16(q+1)(3q-2)}(1+y)&\text{if $q$ is odd and}\ \displaystyle  s=\frac{q^2-1}2,\vspace{2mm}\cr
0&\text{otherwise}.
\end{cases}
\end{equation}
\end{lem}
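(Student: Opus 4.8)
The plan is to bypass the coefficient sums $S_q(\alpha,a)$ of Section~2 altogether and evaluate the power sums $\sum_{x\in\Bbb F_{q^2}}f(x)^s$ directly, by fibering $\Bbb F_{q^2}^*$ over $\mu_{q+1}=\{u\in\Bbb F_{q^2}:u^{q+1}=1\}$ via the map $x\mapsto x^{q-1}$. Since $f(0)=0$, for $x\ne0$ I write $f(x)=x(a+x^{3(q-1)})$. Grouping $x\in\Bbb F_{q^2}^*$ by $u=x^{q-1}\in\mu_{q+1}$, each fiber is a coset $x_0\Bbb F_q^*$ on which $x^{3(q-1)}=u^3$ is constant, and summing $x^s$ over the fiber gives $x_0^s\sum_{\zeta\in\Bbb F_q^*}\zeta^s$. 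As this vanishes unless $(q-1)\mid s$, the sum is $0$ for every $s$ not divisible by $q-1$; and for $s=m(q-1)$ (with $1\le m\le q$, the full range of such multiples in $[1,q^2-2]$) I obtain, using $x_0^{m(q-1)}=u^m$,
\[
\sum_{x\in\Bbb F_{q^2}}f(x)^{m(q-1)}=-\sum_{u\in\mu_{q+1}}u^m(a+u^3)^{m(q-1)}.
\]
This is just the ``$\alpha+\beta=q-1$'' reduction of Section~2, recast in a form adapted to the present hypothesis.

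The key observation is that $y=a^{\frac{q+1}3}$ being a primitive $3$rd root of unity forces $a^{q+1}=y^3=1$, so $a\in\mu_{q+1}$ and $a^q=a^{-1}$. For $u\in\mu_{q+1}$ I would then compute, whenever $a+u^3\ne0$,
\[
(a+u^3)^{q-1}=\frac{(a+u^3)^q}{a+u^3}=\frac{a^{-1}+u^{-3}}{a+u^3}=a^{-1}u^{-3}.
\]
To make this valid for \emph{every} $u$ I must rule out $a+u^3=0$, i.e.\ show $-a$ is not a cube in $\mu_{q+1}$. Since $3\mid q+1$, this amounts to $(-a)^{\frac{q+1}3}\ne1$; and $(-a)^{\frac{q+1}3}=(-1)^{\frac{q+1}3}y=\pm y$, neither of which is $1$ because $y$ is a primitive cube root of unity (so $y\ne1$ and $y\ne-1$). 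Hence $a+u^3\ne0$ on all of $\mu_{q+1}$, and substituting yields $u^m(a+u^3)^{m(q-1)}=a^{-m}u^{-2m}$, so that
\[
\sum_{x\in\Bbb F_{q^2}}f(x)^{m(q-1)}=-a^{-m}\sum_{u\in\mu_{q+1}}u^{-2m}.
\]

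To finish, the inner sum equals $q+1$ if $(q+1)\mid 2m$ and $0$ otherwise; for $1\le m\le q$ the only possibility is $2m=q+1$, which requires $q$ odd and $m=\frac{q+1}2$. Thus all power sums vanish except, for $q$ odd, at $m=\frac{q+1}2$, i.e.\ $s=\frac{q^2-1}2$, where the value is $-a^{-\frac{q+1}2}(q+1)$. Since $q\equiv0\pmod p$, the scalar $q+1$ equals $1$ in $\Bbb F_{q^2}$; and using $1+y=-y^2=-a^{\frac{2(q+1)}3}$ together with $a^{q+1}=1$, I would verify $-a^{-\frac{q+1}2}=a^{-\frac16(q+1)(3q-2)}(1+y)$ simply by checking the two exponents agree modulo $q+1$.

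The one genuinely delicate point is the non-vanishing of $a+u^3$ on all of $\mu_{q+1}$: without it the identity $(a+u^3)^{q-1}=a^{-1}u^{-3}$ fails, and the exceptional fibers with $u^3=-a$ would contribute spurious terms (precisely for $3\mid m$) that are absent from the stated formula. Establishing that $y$ primitive forces $-a\notin(\mu_{q+1})^3$ is therefore the crux; the remaining exponent bookkeeping is routine given $a^{q+1}=1$.
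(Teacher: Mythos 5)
Your proof is correct, but it takes a genuinely different route from the paper's. The paper evaluates $\sum_{x}f(x)^s$ through the binomial-coefficient sums $S_q(\alpha,a)$ of Section~2: it counts the multiples of $q+1$ lying in the interval $I_\alpha$ (three consecutive ones, except when $\alpha=\frac{q-1}2$ with $q$ odd, where there are only two), and the vanishing then comes from summing $y^{-l}$ over three consecutive $l$, i.e.\ from $1+y+y^2=0$, with the exceptional $\alpha=\frac{q-1}2$ producing the surviving term $1+y$. You instead fiber $\Bbb F_{q^2}^*$ over $\mu_{q+1}$ via $x\mapsto x^{q-1}$, which reduces everything to $-a^{-m}\sum_{u\in\mu_{q+1}}u^{-2m}$ once the collapse $(a+u^3)^{q-1}=a^{-1}u^{-3}$ is established --- and that collapse is exactly where the hypothesis enters, via $a^{q+1}=y^3=1$ together with the non-vanishing of $a+u^3$ on all of $\mu_{q+1}$; your cube-class computation $(-a)^{\frac{q+1}3}=\pm y\ne 1$ is sound (and is essentially the same fact as Remark~2.2(i), that $0$ is the only root of $f$). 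Your final bookkeeping also checks out: using $1+y=-y^2=-a^{\frac23(q+1)}$, the exponents $-\frac16(q+1)(3q-2)+\frac23(q+1)$ and $-\frac{q+1}2$ differ by $\frac{(q+1)(3-q)}2$, an integer multiple of $q+1$ since $q$ is odd, so the two closed forms agree. As for what each approach buys: yours is shorter and more conceptual for this lemma --- the exceptional case $s=\frac{q^2-1}2$ falls out of the divisibility $(q+1)\mid 2m$ rather than from an interval-counting case analysis --- and it is very much in the spirit of the subgroup/R\'edei-function viewpoint mentioned in the paper's Note. The paper's heavier route, on the other hand, reuses machinery ($S_q(\alpha,a)$, Lemma~2.3) that is indispensable for the converse direction of Theorem~1.1, where $a^{q+1}\ne 1$ and no such collapse of $(a+x^{3(q-1)})^{q-1}$ into a monomial is available; your method, relying crucially on $a\in\mu_{q+1}$, would not extend to that part of the argument.
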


\begin{proof}
By the observation after \eqref{2.1}, we only have to consider $s=\alpha+(q-1-\alpha)q$, $0\le \alpha\le q-1$. By \eqref{2.4}, we may further assume $\alpha+1\equiv 0\pmod 3$.
There is nothing to prove if $q=2$. Thus we assume $q>3$. 

\medskip
$1^\circ$ We claim that $I_\alpha=[2\alpha+2-3q,\, 2\alpha-1]$ contains exactly three consecutive multiples of $q+1$ unless $\alpha=\frac{q-1}2$ ($q$ odd); in the latter case, $I_\alpha=[-2q+1,\, q-2]$ contains exactly two multiples of $q+1$ which are $-(q+1)$ and $0$.

The length of $I_\alpha$ is $2\alpha-1-(2\alpha+2-3q)=3(q-1)$. Since $2q<3(q-1)<3(q+1)$, there are at least $2$ and at most $3$ multiples of $q+1$ in $I_\alpha$. Let $k(q+1),(k+1)(q+1)\in I_\alpha$. If $I_\alpha=[2\alpha+2-3q,\,2\alpha-1]$ contains only two multiples of $q+1$, then 
\begin{equation}\label{3.2}
\begin{cases}
2\alpha+2-3q\ge k(q+1)-q,\cr
2\alpha-1\le (k+1)(q+1)+q.
\end{cases}
\end{equation}
(See Figure~\ref{F1}.)
\vspace{5mm}
\begin{figure}[h]
\[
\beginpicture
\setcoordinatesystem units <3mm,3mm> point at 0 0

\arrow <4pt> [0.3, 0.67] from -1 -2 to -13 -2
\arrow <4pt> [0.3, 0.67] from 1 -2 to 13 -2

\setlinear
\plot -16 0  16 0 /
\plot -14 0  -14 0.3 / 
\plot -6 0  -6 0.3 /
\plot 6 0  6 0.3 /
\plot 14 0  14 0.3 /
\plot -13 0  -13 -3 /
\plot 13 0  13 -3 /

\put {$I_\alpha$} at 0 -2
\put {$\scriptstyle k(q+1)-q$} [b] at -14 0.6
\put {$\scriptstyle k(q+1)$} [b] at -6 0.6 
\put {$\scriptstyle (k+1)(q+1)$} [b] at 6 0.6
\put {$\scriptstyle (k+1)(q+1)+q$} [b] at 14 0.6 
\endpicture
\]
\caption{Proof of \eqref{3.2}}\label{F1}
\end{figure}
\vspace{5mm}

\noindent
Since $2\alpha+2-3q,\; 2\alpha-1\equiv 0\pmod 3$, we have
\begin{equation}\label{3.3}
\begin{cases}
2\alpha+2-3q\ge k(q+1)-q+2,\cr
2\alpha-1\le (k+1)(q+1)+q-2.
\end{cases}
\end{equation}
Taking the difference of the two inequalities in \eqref{3.3}, we conclude that 
\[
\begin{cases}
2\alpha+2-3q = k(q+1)-q+2,\cr
2\alpha-1 = (k+1)(q+1)+q-2.
\end{cases}
\]
Since $0\le \alpha\le q-1$, we must have $k=-1$. Thus $\alpha=\frac{q-1}2$.

\medskip
$2^\circ$
First assume $\alpha\ne\frac{q-1}2$. Let $I_\alpha\cap (q+1)\Bbb Z=(q+1)L$, where $L$ is a set of three consecutive integers. Then we have
\begin{equation}\label{3.4}
\begin{split}
S_q(\alpha, a)=\;&\sum_{-\alpha-1+3(i-j)\in I_\alpha\cap(q+1)\Bbb Z}\binom\alpha i\binom{q-1-\alpha}j a^{-i-jq}\cr
=\;&\sum_{l\in L}\;\sum_{-\alpha-1+3(i-j)=l(q+1)}\binom\alpha i\binom{q-1-\alpha}j a^{-i-jq}\cr
=\;&\sum_{l\in L}\;\sum_{i-j=\frac13[\alpha+1+l(q+1)]}\binom\alpha i\binom{q-1-\alpha}j a^{-i+j}\qquad\text{(since $a^{q+1}=1$)}\cr
=\;&\sum_{l\in L}a^{-\frac 13[\alpha+1+l(q+1)]}\sum_{i-j=\frac13[\alpha+1+l(q+1)]}\binom\alpha {\alpha-i}\binom{q-1-\alpha}j\cr
=\;&a^{-\frac 13(\alpha+1)}\sum_{l\in L}y^{-l}\sum_{\alpha-i+j=\frac13[2\alpha-1-l(q+1)]}\binom\alpha {\alpha-i}\binom{q-1-\alpha}j\cr
=\;&a^{-\frac 13(\alpha+1)}\sum_{l\in L}y^{-l}\binom{q-1}{\frac 13[2\alpha-1-l(q+1)]}\cr
=\;&-a^{-\frac 13(\alpha+1)}\sum_{l\in L}y^{-l}\cr
=\;&0.
\end{split}
\end{equation}
In the next-to-last step, we have $\binom{q-1}{\frac 13[2\alpha-1-l(q+1)]}=(-1)^{\frac 13[2\alpha-1-l(q+1)]}=-1$ since $0\le \frac 13[2\alpha-1-l(q+1)]\le \frac 13[2\alpha-1-(2\alpha+2-3q)]=q-1$. Combining \eqref{2.4} and \eqref{3.4} gives $\sum_{x\in\Bbb F_{q^2}}f(x)^s=0$.
 
\medskip
$3^\circ$ Now assume $\alpha=\frac{q-1}2$. By the calculation in \eqref{3.4}, we have
\begin{equation}\label{3.5} 
S_q(\alpha, a)=-a^{-\frac 13(\alpha+1)}\sum_{l=-1,0}y^{-l}=-a^{-\frac 16(q+1)}(1+y).
\end{equation}
Combining \eqref{2.4} and \eqref{3.5} gives
\[
\sum_{x\in\Bbb F_{q^2}}f(x)^s=a^{\frac{q+1}2(1-q)-\frac{q+1}6}(1+y)=a^{-\frac 16(q+1)(3q-2)}(1+y).
\]
\end{proof}

\begin{proof}[Proof of Theorem~\ref{T1.1}] 
($\Leftarrow$) (ii) -- (vii) are sporadic cases with small $q$. Using a computer, it is easy to verify that $f$ is a PP in each of these cases. Now assume (i). By Lemma~\ref{L3.1}, we have $\sum_{x\in\Bbb F_{q^2}}f(x)^s=0$ for all $1\le s\le q^2-2$. Also note from Remark~\ref{R2.2} (i) that $0$ is the only root of $f$ in $\Bbb F_{q^2}$. Thus $f$ is a PP of $\Bbb F_{q^2}$.

\medskip
($\Rightarrow$) Assume that $f$ is a PP of $\Bbb F_{q^2}$. By Lemma~\ref{L2.1}, we have $q+1\equiv 0\pmod 3$. Let $y=a^{\frac{q+1}3}$. If $y$ is a primitive $3$rd root of unity, by Lemma~\ref{L3.1}, $q$ must be even, i.e., $q=2^{2k+1}$, and we have case (i). Now assume that $y^2+y+1\ne 0$. We show that one of the cases (ii) -- (vii) occurs.

The sum in the right side of \eqref{2.7} is a polynomial in $y^{-1}$ ($=v$) which can be easily computed for small $\alpha$ with computer assistance. For $\alpha=2,5,8,11,14$, we find that 
\begin{equation}\label{3.6}
S_q(\alpha,a)=(-a)^{\frac{\alpha+1}3q}y^{-3\alpha-2}(y^2+y+1)
\begin{cases}
3^{-2}g_2(y)&\text{if}\ \alpha=2,\ q\ge 8,\vspace{1mm}\cr
3^{-6}g_5(y)&\text{if}\ \alpha=5,\ q\ge 14,\vspace{1mm} \cr
3^{-10}g_8(y)&\text{if}\ \alpha=8,\ q\ge 20,\vspace{1mm} \cr
3^{-15}g_{11}(y)&\text{if}\ \alpha=11,\ q\ge 26,\vspace{1mm} \cr
3^{-19}g_{14}(y)&\text{if}\ \alpha=14,\ q\ge 32,
\end{cases}
\end{equation}
where
\[
\begin{split}
g_2(\x)=\,&2 \x^5+3 \x^4-23 \x^3-8 \x^2-9 \x+44, \cr
g_5(\x)=\,&-14 \x^{14}-8 \x^{13}+22 \x^{12}-469 \x^{11}-1093 \x^{10}+8852 \x^9+6801 \x^8+10527 \x^7\cr
&-61068 \x^6-18619 \x^5-25033 \x^4+120197 \x^3+13516 \x^2+16822 \x-71162, \cr
g_8(\x)=\,& 130 \x^{23}+57 \x^{22}-187 \x^{21}+4082 \x^{20}+3585 \x^{19}-7667 \x^{18}+156234 \x^{17}\cr
&+453573 \x^{16}-3916551 \x^{15}-4144622 \x^{14}-7594467 \x^{13}+48939959 \x^{12}\cr
&+25221008 \x^{11}+39342423 \x^{10}-213366911 \x^9-61811112 \x^8-88032825 \x^7\cr
&+422650317 \x^6+66303028 \x^5+88882095 \x^4-389019163 \x^3-25886212 \x^2\cr
&-33211905 \x+135094180, \cr
g_{11}(\x)=\,& -3952 \x^{32}-1522 \x^{31}+5474 \x^{30}-139802 \x^{29}-89324 \x^{28}+229126 \x^{27}\cr
&-3943602 \x^{26}-4392909 \x^{25}+8336511 \x^{24}-180820302 \x^{23}-605825169 \x^{22}\cr 
&+5521784781 \x^{21}+7111655988 \x^{20}+14607372831 \x^{19}-101269369227 \x^{18}\cr
&-69095625624 \x^{17}-119477261853 \x^{16}+705650100129 \x^{15}+303870716124 \x^{14}\cr
&+475920749355 \x^{13}-2503382174319 \x^{12}-706243777836 \x^{11}-1034492806725 \x^{10}\cr
&+4972469163636 \x^9+898579001889 \x^8+1253008322595 \x^7-5598768742164 \x^6\cr
&-591556509206 \x^5-794043854630 \x^4+3339003167188 \x^3+157572058982 \x^2\cr
&+205140400010 \x-819352075360, \cr
g_{14}(\x)=\,& 41800 \x^{41}+14895 \x^{40}-56695 \x^{39}+1691000 \x^{38}+905631 \x^{37}-2596631 \x^{36}\cr
&+47250150 \x^{35}+37894401 \x^{34}-85144551 \x^{33}+1395800990 \x^{32}+1826164521 \x^{31}\cr
&-3221965511 \x^{30}+75566097190 \x^{29}+281332431561 \x^{28}-2683745985685 \x^{27}\cr
&-3976231919076 \x^{26}-8901790877799 \x^{25}+65232090577890 \x^{24}+53701334712609 \x^{23}\cr
&+100487514543597 \x^{22}-632854611825486 \x^{21}-347885978019711 \x^{20}\cr
&-586551837541203 \x^{19}+3307822221633594 \x^{18}+1283881108529889 \x^{17}\cr
&+2015859062567817 \x^{16}-10419893389315746 \x^{15}-2892546806289271 \x^{14}\cr
&-4307726185011783 \x^{13}+20728564105915330 \x^{12}+4054215382726378 \x^{11}\cr
&+5793391583605092 \x^{10}-26245535590106350 \x^9-3451745974770042 \x^8\cr
&-4770402189292728 \x^7+20520594631893930 \x^6+1634454816505198 \x^5\cr
&+2197118421394272 \x^4-9034762128135730 \x^3-330180086243950 \x^2\cr
&-433563200685120 \x+1713531735146800.
\end{split}
\]
Combining \eqref{2.5} and \eqref{3.6}, we have
\begin{equation}\label{3.7}
\begin{cases}
g_2(y)=0&\text{if}\ q\ge 8,\cr
g_5(y)=0&\text{if}\ q\ge 14,\cr
g_8(y)=0&\text{if}\ q\ge 20,\cr
g_{11}(y)=0&\text{if}\ q\ge 26,\cr
g_{14}(y)=0&\text{if}\ q\ge 32.
\end{cases}
\end{equation}
When $q<14$, a quick computer search produces cases (ii) -- (iv). So we assume $q\ge 14$. Next we compute the resultant of $g_2$ and $g_5$:
\[
R(g_2,g_5)=2^5\cdot 3^{35}\cdot 17^2\cdot 23\cdot 29 \cdot 103\cdot 16069.
\]
By \eqref{3.7} and the fact that $q+1\equiv 0\pmod 3$, we must have $p=\text{char}\,\Bbb F_q\in\{2,17,23,29\}$.

When $p=2$, we have $q\ge 32$. In this case, $\text{gcd}(g_2,g_5,g_8)=\x$, which is a contradiction to \eqref{3.7}.

When $p=17$, we find that $\text{gcd}(g_2,g_5,g_8)=1$. By \eqref{3.7}, we must have $q=17$. A computer search results in case (v).

When $p=23$, $\text{gcd}(g_2,g_5,g_8)=\x+1$, and $g_{11}(-1)=12\ne 0$. By \eqref{3.7}, we must have $q=23$. A computer search results in case (vi).

When $p=29$, $\text{gcd}(g_2,g_5,g_8)=\x+10$, and $g_{11}(-10)=0$, $g_{14}(-10)=2\ne 0$. By \eqref{3.7}, we must have $q=29$. A computer search results in case (vii).
\end{proof}



\begin{thebibliography}{99}

\bibitem{Akb-Wan06}
A. Akbary and Q. Wang, {\it A generalized Lucas sequence and permutation binomials}, Proc. Amer. Math. Soc. {\bf 134} (2006), 15 -- 22.

\bibitem{Car62}
L. Carlitz, {\it Some theorems on permutation polynomials}, Bull. Amer. Math. Soc. {\bf 68} (1962) 120 -- 122.

\bibitem{Car-Wel66}
L. Carlitz and C. Wells, {\it The number of solutions of a special system of equations in a finite field}, Acta Arith. {\bf 12} (1966/1967) 77 -- 84.


\bibitem{Hou13}
X. Hou, {\it A class of permutation binomials over finite fields}, J. Number Theory {\bf 133} (2013), 3549 -- 3558.

\bibitem{Hou-p}
X. Hou, {\it Determination of a type of permutation trinomials over finite fields, II}, preprint.

\bibitem{Kim-Lee95}
S. Y. Kim and J. B. Lee, {\it Permutation polynomials of the type $x^{1+((q-1)/m)} +ax$}, Commun. Korean Math. Soc. {\bf 10} (1995), 823 -- 829.

\bibitem{LN} R. Lidl and H. Niederreiter, {\it Finite Fields}, 2nd ed.,
Cambridge Univ. Press, Cambridge,  1997.

\bibitem{Mas-Zie09}
A. M. Masuda and M. E. Zieve, {\it Permutation binomials over finite fields}, Trans. Amer. Math. Soc. {\bf 361} (2009), 4169 -- 4180.

\bibitem{Nie-Rob82}
H. Niederreiter and K. H. Robinson, 
{\it Complete mappings of finite fields},
J. Austral. Math. Soc. Ser. A {\bf 33} (1982), 197 -- 212.

\bibitem{Sti93}
H. Stichtenoth, {\it Algebraic Function Fields and Codes}, Universitext. Springer-Verlag, Berlin, 1993.

\bibitem{TZHL}
Z. Tu, X. Zeng, L. Hu, C. Li,
{\it A class of binomial permutation polynomials}, 
{\tt arXiv:1310.0337}, 2013.

\bibitem{Tur88}
G. Turnwald, {\it Permutation polynomials of binomial type}, Contributions to General Algebra, 6, 281 -- 286, H\"older-Pichler-Tempsky, Vienna, 1988.

\bibitem{Wan87}
D. Wan, {\it Permutation polynomials over finite fields}, Acta Math. Sinica (N.S.) {\bf 3} (1987), 1 -- 5.

\bibitem{Wan94}
D. Wan, {\it Permutation binomials over finite fields}, Acta Math. Sinica (N.S.) {\bf 10} (1994), Special Issue, 30 -- 35.

\bibitem{Wan07}
Q. Wang, {\it Cyclotomic mapping permutation polynomials over finite fields}, in {\it Sequences, Subsequences, and Consequences}, S.W. Golomb, G. Gong, T. Helleseth, H.-Y. Song, (Eds.),
pp. 119 -- 128, Lecture Notes in Comput. Sci., vol. 4893, Springer, Berlin, 2007.

\bibitem{Wei41}
A. Weil, {\it On the Riemann hypothesis in function fields}, Proc. Nat. Acad. Sci. U. S. A. {\bf 27} (1941), 345 -- 347.

\bibitem{Zie09}
M. E. Zieve, {\it On some permutation polynomials over $\Bbb F_q$  of the form $x^r h(x^{(q-1)/d})$}, Proc. Amer. Math. Soc. {\bf 137} (2009), 2209 -- 2216.

\bibitem{Zie13}
M. E. Zieve, {\it
Permutation polynomials on $\Bbb F_q$ induced from R\'edei function bijections on subgroups of $\Bbb F_q^*$}, {\tt arXiv:1310.0776}, 2013.

\end{thebibliography}
\end{document}